\newtheorem{theorem}{Theorem}[section]
\newtheorem{lemma}[theorem]{Lemma}
\newtheorem{metalemma}[theorem]{Meta Lemma}
\newtheorem{proposition}[theorem]{Proposition}
\newtheorem{corollary}[theorem]{Corollary}
\theoremstyle{definition}
\newtheorem{definition}[theorem]{Definition}
\newtheorem{example}[theorem]{Example}
\newtheoremstyle{case}{}{}{}{}{}{:}{ }{}
\theoremstyle{case}
\newtheorem{case}{\bf{Case}}
\newcommand{\bel}[1]{\begin{equation}\label{#1}}
\newcommand{\ee}{\end{equation}}
\newcommand{\LBA}{\left\{\begin{array}}
\newcommand{\EAR}{\end{array}\right.}
\def\ovu{{\overline{u}}}
\def\ovx{{\overline{x}}}
\def\ovy{{\overline{y}}}
\def\ovz{{\overline{z}}}
\def\ovdelta{{\overline{\delta}}}
\def\CC{{\mathcal C}}
\def\CE{{\mathcal E}}
\def\CL{{\mathcal L}}
\def\CN{{\mathcal N}}
\def\CS{{\mathcal S}}
\def\CI{{\mathcal I}}
\def\CJ{{\mathcal J}}
\def\CU{{\mathcal U}}
\def\CM{{\mathcal M}}
\def\CT{{\mathcal T}}
\def\NP{{\mathbf{NP}}}
\def\DP{{\mathbf{DP}}}
\def\CQ{{\overline{\mathbb{Q}}}}
\def\blfootnote{\xdef\@thefnmark{}\@footnotetext}
\newcommand{\gp}[1]{{\left\langle #1 \right\rangle}}
\newcommand{\rb}[1]{{\left( #1 \right)}}
\newcommand{\abs}[1]{\left|{}#1\right|}
\newcommand{\Set}[2]{\left\{\, #1 \;\middle|\; #2 \,\right\}}%set
\def\MN{{\mathbb{N}}}
\def\MZ{{\mathbb{Z}}}
\def\MQ{{\mathbb{Q}}}
\def\MR{{\mathbb{R}}}
\def\BS{{\mathbf{BS}}}
\DeclareMathOperator{\lcm}{{lcm}}
\DeclareMathOperator{\size}{{size}}
\DeclareMathOperator{\spn}{{span}}
\DeclareMathOperator{\mspn}{{max\;\!span}}
\DeclareMathOperator{\gap}{gap}
\DeclareMathOperator{\height}{height}
\DeclareMathOperator{\TPART}{3PART}
\title{The Diophantine problem for systems of algebraic equations with exponents}
\author{Richard Mandel}
\address{Department of Mathematics, University of the Basque Country, Bilbao, Spain}
\email{mandel.richard@ehu.eus}
\thanks{The first author was partially supported by the Basque Government Grant IT1483-22 and the Spanish Government grants PID2019-107444GA-I00 and PID2020-117281GB-I00.}
\author{Alexander Ushakov}
\address{Department of Mathematical Sciences, Stevens Institute of Technology, Hoboken NJ 07030}\email{aushakov@stevens.edu}
\date{}
\begin{document}

\maketitle

\begin{abstract}
Consider the equation $q_1\alpha^{x_1}+\dots+q_k\alpha^{x_k} = q$, with constants $\alpha \in \overline{\mathbb{Q}} \setminus \{0,1\}$, $q_1,\ldots,q_k,q\in\overline{\mathbb{Q}}$ and unknowns $x_1,\ldots,x_k$, referred to in this paper as an \emph{algebraic equation with exponents}. We prove that the problem to decide if a given equation has an integer solution is $\textbf{NP}$-complete, and that the same holds for systems of equations (whether $\alpha$ is fixed or given as part of the input). Furthermore, we describe the set of all solutions for a given system of algebraic equations with exponents and prove that it is semilinear.
\end{abstract}
\blfootnote{\emph{2020 Mathematics Subject Classification.} Primary 11Y16, 68Q15, 68W30.}

\blfootnote{\emph{Key words and phrases.} Diophantine problem, algebraic equations, systems of equations,
exponents, complexity, $\NP$-completeness.}

\section{Introduction}

A classical \emph{Diophantine equation} is an equation with integer coefficients and one or more unknowns, for which only the integer solutions are of interest. Such equations have been studied since antiquity, motivating much fruitful work in diverse areas of mathematics (Fermat's last theorem is a particularly famous example). For a class $\CC$ of Diophantine equations (or of systems of Diophantine equations), one can study three related algorithmic problems:
\begin{enumerate}[(i)]
\item
decide whether a given equation from $\CC$ has a solution or not;
\item
find a solution for a given equation from $\CC$, assuming one exists;
\item
describe the set of all solutions for a given equation from $\CC$.
\end{enumerate}
The first problem is called
the \emph{Diophantine problem for} $\CC$, and is denoted $\DP_\CC$. In modern mathematics, the notion is extended to any algebraic structure (or a class of algebraic structures) $\CS$, where the notation $\DP_{\CC}(\CS)$ may be used. For instance, we may speak of the Diophantine problem  $\DP_{\mathcal P}(\MQ)$
for systems of polynomial equations over $\mathbb Q$, or
the Diophantine problem $\DP(\CN_3)$
for systems of arbitrary equations in the class $\CN_3$ of nilpotent groups of step $3$.

The most well studied classes of Diophantine equations are
the classes $\CL$ of linear systems and $\mathcal P$ of polynomial systems.
The problem $\DP_{\CL}$ has an efficient (polynomial time) solution:
it may be solved by computing the Hermite (or Smith) normal form of
the corresponding matrix (see \cite{Storjohann-Labahn:1996}).
On the other hand, $\DP_{\mathcal P}(\MZ)$ is Hilbert's famous tenth problem, first proposed in 1900 and only shown to have no algorithmic solution in 1970, as a result of the combined work of Y. Matiyasevich, J. Robinson, M. Davis, and H. Putnam, spanning 21 years (with Matiyasevich completing the theorem---known as the MRDP theorem---in 1970 \cite{Matiyasevich_book}). Notably, the decidability of $\DP_{\mathcal P}(\MQ)$ remains
an open question.

In this paper we study the class $\CE$ of systems of Diophantine equations of the following form:
\begin{equation}\label{eq-exp-sys}
\left\{
\begin{array}{cl}
q_{11}\alpha_1^{x_1}+\dots+q_{1k}\alpha_1^{x_k}&= q_{10}\\
\vdots&\\
q_{s1}\alpha_s^{x_1}+\dots+q_{sk}\alpha_s^{x_k}&= q_{s0}\\
\end{array}
\right.
\end{equation}
with constants $\alpha_1,\ldots,\alpha_s\in\CQ\setminus\{0,1\}$ (where $\CQ$ denotes an algebraic closure of $\MQ$), $q_{ij}\in \MQ(\alpha_i)$, unknowns $x_1,\ldots,x_k$, and whose solutions must be in $\MZ^k$.
We call these equations \emph{algebraic equations with exponents}. The Diophantine problem for $\CE$ may be considered in two distinct forms: the \emph{uniform problem}, where the $\alpha_i$ are given as part of the input, and the \emph{fixed base} problem, where the $\alpha_i$ are fixed beforehand.

Notice that there is no loss of generality in requiring that the coefficients for the $i$th equation be contained in $\MQ(\alpha_i)$. In fact, the more general problem in which $q_{i1}, \ldots, q_{ik}\in \CQ$ (with the input given in a natural form described below) is polynomial-time equivalent to $\DP_{\CE}$. This may be seen from the example of a single equation
\begin{equation}\label{eq:single-eq}
    q_1\alpha^{x_1}+\cdots+q_k\alpha^{x_k} = q_0
\end{equation}
in the following manner. Let $K/\MQ(\alpha)$ be a proper finite degree extension such that $q_0,q_1, \ldots,q_k\in K$, and let $m = [K:\MQ(\alpha)]$. Let $\beta$ be a primitive element which generates $K$ over $\MQ(\alpha)$, and suppose that each $q_i$ is given as a vector $(f_{i1},\ldots,f_{im})\in \MQ(\alpha)^m$ such that $q_i = \sum_{j=1}^{m}f_{ij}\beta^{j-1}$. Then \eqref{eq:single-eq} has a solution if and only if the system
\begin{equation}\label{eq:system-eq}
\left\{
\begin{array}{cl}
f_{11}\alpha^{x_1}+\dots+f_{k1}\alpha^{x_k}&= f_{01}\\
\vdots&\\
f_{1m}\alpha^{x_1}+\dots+f_{km}\alpha^{x_k}&= f_{0m}\\
\end{array}
\right.
\end{equation}
has a solution. Observe that \eqref{eq:system-eq} is an instance of $\DP_{\CE}$ whose input is roughly the same size as that of \eqref{eq:single-eq}. Moreover, it is clear that we can transform one into the other in polynomial time.

\subsection{Encoding the input of $\DP_{\CE}$}\label{se:encoding}
The constants $\alpha_1,\ldots,\alpha_s$ are algebraic numbers (other than 0 and 1) called the \emph{bases} of the system; in what follows, we use the notation $\deg(w)$ to denote the degree of the minimal polynomial of an algebraic number $w\in \CQ$. An instance of $\DP_{\CE}$ is given with the following data.
\begin{itemize}
\item The bases $\alpha_i$ are specified (either as part of the input, or fixed in advance) as lists of the integer coefficients of their minimal polynomials $p_{\alpha_i}(x) = \sum_{j=0}^{d_i} c_{ij}x^j$, where $d_i = \deg(\alpha_i)$.

\item The coefficients $q_{ij}\in \MQ(\alpha_i)$ are given as vectors $(r_{ij}^0,\ldots,r_{ij}^{d_i-1})\in \MQ^{d_i}$ such that $q_{ij} = \sum_{h=0}^{d_i-1}r_{ij}^h\alpha^{h}$, with $r_{ij}^h$ given as a quotient (in lowest terms) $r_{ij}^h=\frac{a_{ij}^h}{b_{ij}^h}$.
\end{itemize}
This naturally defines the size of an instance $E$ of the problem as
\begin{equation}\label{eq:size}
\size(E) = \sum_{i=1}^s\sum_{j=1}^k\sum_{h=0}^{d_i-1}
\Big(
\log_2(|a_{ij}^h|+1) + \log_2(|b_{ij}^h|+1)
\Big)+ \sum_{i=1}^s\sum_{j=0}^{d_i}\log_2(|c_{ij}|+2),
\end{equation}
if $E$ is an instance of the uniform problem, and
\begin{equation}
\size(E) = \sum_{i=1}^s\sum_{j=1}^k\sum_{h=0}^{d_i-1}
\Big(
\log_2(|a_{ij}^h|+1) + \log_2(|b_{ij}^h|+1)
\Big)
\end{equation}
if $E$ is an instance of the fixed base problem. This is roughly the number of bits required to encode $E$ (the terms $\log_2(|c_{ij}|+2)$ are needed to count the zero coefficients of $p_{\alpha_i}$, and to ensure that $\deg (\alpha_i)<\size(E)$).

The minimal polynomials of $\alpha_i$ are assumed to be irreducible, which means that $\DP_{\CE}$, as defined above, is a \emph{promise problem} (i.e. the input is ``promised'' to belong to a certain subset of all possible inputs). However, this is only a matter of convenience, since D. G. Cantor has shown that the problem of deciding whether a given polynomial is irreducible is in $\NP$ (see \cite{Cantor:1981}). Thus, our proof that $\DP_{\CE} \in \NP$ can easily be adapted to allow for arbitrary input by combining Cantor's $\NP$-certificate with our own.

We assume a basic background in complexity theory and $\NP$-completeness, as may be found in \cite{Garey-Johnson:1979}.

\subsection{Main results}
We prove that the Diophantine problem for
$\CE$ is $\NP$-complete, and that it remains $\NP$-complete for single equations
with any fixed choice of $\alpha\in\CQ\setminus\{0,1\}$. Additionally, we describe the solution set for a given instance,
and prove that it is semilinear. On a finer scale, we show that the Diophantine problem for a system $E\in\CE$ can be solved in polynomial time as long as the number of equations is bounded by a fixed constant. In other words, it is a \emph{fixed-parameter tractable problem}.

\subsection{Related results and applications}
Equations of a similar form were considered by A. Semenov in 1984, in the course of his investigation of the first-order theory of the algebraic structure $\gp{\MN;+,k^x}$ (where $k \in \MN$ is a fixed base for the exponential). Using methods involving quantifier elimination, Semenov proved the decidability of this theory in \cite{Semenov:1984}. Notably, while it is a simple matter to solve \eqref{eq-exp-sys} over the real or complex numbers, the first-order theory of $\gp{\mathbb C;+,k^x}$ is undecidable (since $\gp{
\MZ;+,\cdot}$ is interpretable in this structure), and the decidability of $\gp{\MR;+,k^x}$ is an open question (but known to follow from Schanuel's conjecture; see \cite{Marker:1996}).

Equations with exponents play a very important role in study
of discrete optimization problems
(such as the knapsack problem, the power word problem,
see \cite{Dudkin-Treyer:2018, Lohrey:2020})
and decision problems over certain groups. In fact, our present interest in the class of equations $\CE$ arose from studying the Diophantine problem over the Baumslag-Solitar groups. There turns out to be a close connection
between algebraic equations with exponents in base
$\frac{m}{n}$ and equations in $\BS(m,n)$, suggesting the
importance of decidability and complexity results for $\DP_{\CE}$. In a forthcoming paper \cite{Mandel-Ushakov:2022}, we employ some of the present results to prove, for instance, that the quadratic Diophantine problem for $\BS(1,n)$ is $\NP$-complete for $n \neq 1$, and we expect further applications to the study of equations (quadratic and otherwise) in $\BS(m,n)$ and related groups. For background and recent results on the Diophantine problem in various classes of groups, see, for instance, \cite{Kharlampovich-Taam:2017} and \cite{G-M-O-solvable:2020}, as well as \cite{Ciobanu-Holt-Rees:2020} for the decidability of the Diophantine problem in groups that are virtually a direct product of hyperbolic groups (including the unimodular Baumslag-Solitar groups $\BS(n,\pm n)$).

\subsection{Outline and general approach}
In Section~\ref{se:semenov-np-hardness}, we prove that the Diophantine problem
for a single equation with $\alpha\in\CQ\setminus\{0,1\}$
is $\NP$-hard. This is accomplished via a reduction of either the \emph{partition problem} (in case $\alpha$ is a root of unity) or the \emph{3-partition problem} (both well-known $\NP$-complete problems) to $\DP$.
In Section~\ref{se:semenov-np-equation},
we prove that the Diophantine problem
for a single equation with $\alpha\in\CQ\setminus\{0,1\}$
belongs to $\NP$.

Sections \ref{se:semenov-np-homo-system} and \ref{se:semenov-np-non-homo-system} extend the above results to finite systems of equations, and in Section~\ref{se:semenov-solution-set} we describe the solution set for a system of equations. Finally, Section~\ref{se:semenov-parametrized-complexity}
addresses the parameterized complexity of the problem.

\section{Complexity lower bound for a single equation}
\label{se:semenov-np-hardness}

Fix $\alpha\in\CQ\setminus\{0,1\}$ and consider
the equation
\begin{equation}\label{eq:semenov}
q_1\alpha^{x_1}+\dots+q_k\alpha^{x_k} = q_0
\end{equation}
with $q_1,\ldots,q_k,q_0\in \mathbb Q(\alpha)$, unknowns $x_1,\ldots,x_k$
and solutions sought in $\MZ$. In this section, we prove that the Diophantine problem for \eqref{eq:semenov} is $\NP$-hard. We first handle the special case where $\alpha$ is a root of unity.

\subsection{$\alpha$ is a root of unity}
Let $\alpha=e^{2\pi i/n} \neq 1$ for $n\ge 2$. Below, we prove $\NP$-hardness by reducing
the partition problem (which is known to be $\NP$-complete) to the decidability
of \eqref{eq:semenov}.
Recall that the \emph{partition problem} is
the problem to decide if a given multiset $\{q_1,\ldots,q_k\}$
of positive integers
can be partitioned into two submultisets $S_0$ and $S_1$
such that $\sum_{x\in S_0}x = \sum_{x\in S_1}x$, see \cite{Garey-Johnson:1979}.

For an instance $Q=\{q_1,\ldots,q_k\}$ of the partition problem, define $L=\frac{1}{2}\sum q_i$
(we may assume that $L\in\MZ$), and let $E_Q$ denote the following equation:
\begin{equation}\label{eq:root-unity}
\sum q_i \alpha^{x_i} = L+L\alpha.
\end{equation}
Note that $\size(E_Q) = \sum_{i=1}^k \log_2(q_i+1) + 2\log_2(L+1)$, which is linear in the size of $Q$ (with $Q$ represented in binary).

\begin{proposition}
$Q$ is a positive instance of the partition problem if and only if $E_Q$ has a solution.
\end{proposition}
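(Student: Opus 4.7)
The plan is to prove both implications separately. For the forward direction, given a partition $Q = S_0 \sqcup S_1$ with $\sum_{q \in S_0} q = \sum_{q \in S_1} q = L$, I would set $x_i = 0$ for $q_i \in S_0$ and $x_i = 1$ for $q_i \in S_1$; then $\sum_i q_i \alpha^{x_i} = L + L\alpha$ holds immediately.

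For the converse, suppose $(x_1,\ldots,x_k)\in\MZ^k$ solves $E_Q$. Since $\alpha^n = 1$, I may replace each $x_i$ by its residue modulo $n$ without changing $\alpha^{x_i}$, and grouping the $q_i$ by this residue rewrites the equation as
\[
\sum_{j=0}^{n-1} T_j \alpha^j \;=\; L + L\alpha,
\]
where $T_j := \sum_{i:\,x_i \equiv j\ (\mathrm{mod}\ n)} q_i$ is a nonnegative integer with $\sum_j T_j = 2L$. The goal is to show $T_j = 0$ for all $j \geq 2$; granted this, the relation reduces to $T_0 + T_1\alpha = L + L\alpha$ which, together with $T_0 + T_1 = 2L$, forces $T_0 = T_1 = L$ (trivially for $n = 2$ since $\alpha = -1$, and by $\MQ$-linear independence of $\{1,\alpha\}$ for $n \geq 3$). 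The sets $S_0 = \{q_i : x_i \equiv 0\}$ and $S_1 = \{q_i : x_i \equiv 1\}$ then exhibit the required partition.

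The substantive step --- and what I expect to be the main obstacle --- is the vanishing claim. I would prove it by a convex-geometric argument: dividing the displayed equation by $2L$ interprets it as a centroid identity asserting that a convex combination of the $n$-th roots of unity (with weights $T_j/(2L)$) equals $(1+\alpha)/2$, the midpoint of the chord $\ell \subset \MC$ joining $1$ and $\alpha$. The key geometric fact is that for $n \geq 3$, every root $\alpha^j$ with $2 \leq j \leq n-1$ lies strictly on the origin-side of $\ell$. Applying the real-linear functional $f$ on $\MC$ that measures signed perpendicular distance from $\ell$ (oriented so $f \geq 0$ on the far side from the origin) to the centroid identity yields $\sum_{j \geq 2} T_j\, f(\alpha^j) = 0$, a sum of nonpositive terms with each factor $f(\alpha^j)$ strictly negative, forcing $T_j = 0$ as desired. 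The only computation required is the strict inequality $f(\alpha^j) < 0$ for $2 \leq j \leq n-1$, which, after rotating so $\ell$ is perpendicular to the real axis, reduces to $\cos\tfrac{(2j-1)\pi}{n} < \cos\tfrac{\pi}{n}$ on the stated range --- an elementary consequence of the behavior of $\cos$ on $[0,\pi]$.
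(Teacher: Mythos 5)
Your proof is correct and follows essentially the same route as the paper's: your signed-distance functional $f$ is, up to an additive constant, exactly the projection $\pi_u(z)=\Re(\ovu z)/|u|$ onto $u=1+\alpha$ used in the paper, and both arguments hinge on the strict inequality $\pi_u(1)=\pi_u(\alpha)=\tfrac{|u|}{2}>\pi_u(\alpha^j)$ for $2\le j\le n-1$ (equivalently, your $\cos\tfrac{(2j-1)\pi}{n}<\cos\tfrac{\pi}{n}$). The only cosmetic differences are that you fold the $\alpha=-1$ case into the general framework rather than treating it separately, and that your $f$ should be called affine rather than linear---harmless here since you apply it to a combination whose weights sum to $1$.
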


\begin{proof}
Consider two cases.
Suppose that $\alpha=-1$. Then the right-hand side of \eqref{eq:root-unity}
is equal to zero, $\alpha^{x_i}=\pm 1$,
and \eqref{eq:root-unity} translates to the equation
$$
\sum \varepsilon_i q_i = 0,
$$
with unknowns $\varepsilon_1\dots,\varepsilon_k=\pm1$, which is equivalent to the
partition problem. The converse is also true.

Suppose that $\alpha\ne -1$. If $Q$ is a positive instance, then it is clear that $E_Q$
has a solution (with $x_i\in \{0,1\}$). Conversely, assume that $E_Q$ has a solution $\ovx = (x_1,\ldots,x_k)$.
Consider the complex number $u = 1+\alpha$ as a vector in $\MR^2$,
and let $\pi_u(z)$ denote the signed scalar projection of $z$ onto $u$,
defined by
$$
\pi_u(z)=\tfrac{\Re(\ovu z)}{|u|}.
$$
It is easily verified that
$$
\pi_u(1) = \pi_u(\alpha) = \tfrac{|u|}{2} >
\max\{\pi_u(\alpha^2),\dots,\pi_u(\alpha^{n-1})\}.
$$
Since the projection of the right-hand side of \eqref{eq:root-unity}
onto $u$ is equal to $L|u|$, and since $\sum_{i=1}^k q_i=2L$, it follows that the $x_i$ must all be either 0 or 1 (otherwise the left-hand side would have a projection strictly less than $L|u|$). Letting $S_0 = \{i\in \{1,\ldots,k\}|x_i=0\}$ and $S_1 = \{i\in \{1,\ldots,k\}|x_i=1\}$, the $\MQ$-linear independence of $\{1,\alpha\}$ implies that $$\sum_{i \in S_0}q_i = L = \sum_{i\in S_1}q_i,$$ proving that $Q$ is a positive instance.
\end{proof}

\begin{corollary}\label{pr:ROU}
Then the (fixed base) Diophantine problem for equations \eqref{eq:semenov} is $\NP$-complete.\qed
\end{corollary}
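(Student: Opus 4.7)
The plan is to observe that the corollary is an immediate consequence of the preceding proposition, together with the (forthcoming) $\NP$-membership result of Section~\ref{se:semenov-np-equation}. For the $\NP$-hardness half, I would verify that the map $Q \mapsto E_Q$ defined just before the proposition is a polynomial-time many-one reduction from the partition problem. Given $Q = \{q_1,\ldots,q_k\}$ in binary, one can in polynomial time compute $L = \tfrac{1}{2}\sum q_i$ (rejecting if this is not an integer, since then $Q$ is trivially a negative instance) and write down equation~\eqref{eq:root-unity}; the explicit computation of $\size(E_Q)$ given before the proposition shows that the output size is linear in the input size. Combined with the proposition, which asserts that $Q$ is a positive instance of the partition problem if and only if $E_Q$ has an integer solution, and with the fact that the partition problem is $\NP$-complete (see \cite{Garey-Johnson:1979}), this yields $\NP$-hardness of the Diophantine problem for \eqref{eq:semenov} with any fixed root of unity $\alpha \neq 1$ as base.

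For $\NP$-completeness one additionally needs $\NP$-membership, which I would cite from Section~\ref{se:semenov-np-equation}, where the Diophantine problem for a single equation of the form~\eqref{eq:semenov} with an arbitrary fixed base is shown to lie in $\NP$. The two halves together give the corollary. Since the reduction itself is routine, the only genuinely substantive ingredient is the forward reference: the main obstacle sits in Section~\ref{se:semenov-np-equation}, where a polynomial-size certificate for a solution must be produced and, in particular, the magnitudes of the $x_i$ in a minimal solution must be bounded polynomially in $\size(E)$.
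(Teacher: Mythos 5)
Your $\NP$-hardness half matches the paper: the map $Q\mapsto E_Q$ is a polynomial-time reduction from the partition problem, and the preceding proposition supplies its correctness. The gap is in the $\NP$-membership half. This corollary sits inside the subsection where $\alpha$ is a root of unity, and Section~\ref{se:semenov-np-equation} explicitly assumes $\alpha\in\CQ^*\setminus\CU$; the paper even remarks, immediately after this corollary, that the root-of-unity case is disposed of separately precisely so that Section~\ref{se:semenov-np-equation} may assume $\alpha$ is \emph{not} a root of unity. The machinery of that section --- Proposition~\ref{pr:unique_solution_exp_eq}, the constant $\kappa(d)$ from Lenstra's lemma, the gap and maximum-span bounds of Lemmas~\ref{le:gap-bound} and~\ref{le:max-span} --- is only established for non-roots of unity, and for $\alpha\in\CU$ the uniqueness claim of Proposition~\ref{pr:unique_solution_exp_eq} fails outright because the solution set is periodic. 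So your forward reference does not deliver membership for the case at hand.

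The argument actually needed is far simpler and self-contained: if $\alpha$ has order $n$, then $\alpha^{x}$ depends only on $x\bmod n$, so any solution of \eqref{eq:semenov} can be reduced to one with $0\le x_1,\ldots,x_k<n$. Since $\deg(\alpha)=\varphi(n)\ge\sqrt{n/2}$, one has $n<2\deg(\alpha)^2=O(\size^2(E))$ (and $n$ is simply a constant in the fixed-base setting), so such a reduced solution is a polynomial-size certificate, verifiable in polynomial time by Lemma~\ref{le:check-sol}. This periodicity argument is exactly what the paper's proof uses, and it is the piece your proposal is missing.
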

\begin{proof}
    $\NP$-hardness is proved above. To establish that the problem is in $\NP$, observe that if there is a solution, there must be one satisfying
    $$0 \leq x_1,\ldots,x_k < n,$$ where $n$ is the order of $\alpha$, and it is easily seen that $n$ is $O(\size^2(E))$. It is shown in Lemma \ref{le:check-sol} that such a solution may be verified in time polynomial in $\size(E)$.
\end{proof}
There exist polynomial-time algorithms (see, for instance \cite{Bradford-Davenport:1988}) to determine, based on its minimal polynomial, whether $\alpha\in \CQ$ is a root of unity (and if so, to find its order). Thus, it is valid to consider this case separately for the uniform Diophantine problem (when $\alpha$ is a part of the input), and we may consequently assume that $\alpha$ is \emph{not} a root of unity in Section \ref{se:semenov-np-equation} (where it is proved that the Diophantine problem for a single equation is in $\NP$).

\subsection{$\alpha$ is not a root of unity}
%\subsection{$\DP_\CE$ is $\NP$-hard when $\alpha$ is a not root of unity}

The following proposition is the main technical result of this section. In what follows, $\CU$ always denotes the set of roots of unity and
$\CQ^\ast = \CQ\setminus\{0\}$.

\begin{proposition}\label{pr:unique_solution_exp_eq}
Let $\alpha\in \CQ^*\setminus \CU$ and $d = \deg(\alpha)$.
For $s\in \MN$, define the positive integer
\begin{equation}
c(\alpha, s) = \left \lceil \frac{3(\ln 2 + 2\ln s)}{\kappa(d)}\right \rceil,
\end{equation}
where
$$
\kappa(d) = \begin{cases} \ln 2 &\mbox{for } \ d = 1,\\
\frac{2}{d \cdot (\ln(3d))^3} &\mbox{for } \ d \geq 2.
\end{cases}
$$
Then for any integers $0\le p_1<p_2<\dots<p_s$  satisfying $p_{i+1}-p_{i} \geq c(\alpha, s)$,
the equation
\begin{equation}\label{eq:exp_equality}
\alpha^{x_1}+\dots+\alpha^{x_s}=
\alpha^{p_1}+\dots+\alpha^{p_s}
\end{equation}
has (up to a permutation) the unique integer solution $x_i = p_i$.
\end{proposition}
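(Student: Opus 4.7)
My plan is to proceed by contradiction and induction on $s$, using a Dobrowolski--Voutier-type lower bound on the Mahler measure of $\alpha$ together with an archimedean dominance argument at a Galois conjugate of maximal modulus.

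I would start by supposing $(x_1, \ldots, x_s)$ is a solution with $\{x_i\} \neq \{p_i\}$ as multisets. After multiplying \eqref{eq:exp_equality} through by $\alpha^N$ for large $N$, I may assume all exponents are non-negative. The non-zero polynomial
\[ P(t) \;=\; \sum_{i=1}^s t^{x_i} \;-\; \sum_{i=1}^s t^{p_i} \;\in\; \MZ[t] \]
vanishes at $\alpha$, so by Gauss's lemma the minimal polynomial $m_\alpha$ divides $P$ in $\MZ[t]$. Since $\alpha \notin \CU$, a Dobrowolski--Voutier inequality yields $\log M(\alpha) \geq d \cdot \kappa(d)$; consequently some Galois conjugate $\beta$ of $\alpha$ satisfies $|\beta| \geq e^{\kappa(d)}$, after reducing to the algebraic integer case via $a_0\alpha$ if needed. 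The case $d = 1$ is handled separately: if $\alpha = a/b$ with $\gcd(a,b) = 1$ and $\alpha \notin \{0, \pm 1\}$, then $\max(|a|,|b|) \geq 2$, and so $\kappa(1) = \log 2$ is realized by the archimedean or a $p$-adic valuation of $\alpha$. Applying Galois to \eqref{eq:exp_equality} gives $\sum \beta^{x_i} = \sum \beta^{p_i}$.

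The key step is to exploit the spacing $p_{i+1} - p_i \geq c(\alpha, s)$ together with the resulting bound $|\beta|^{c(\alpha,s)} \geq e^{c(\alpha,s)\kappa(d)} \geq (2s^2)^3 = 8s^6$ to force $\max_i x_i = p_s$. Specifically, $\beta^{p_s}$ dominates $\sum \beta^{p_i}$ by a factor of at least $|\beta|^{c(\alpha,s)} \geq 8s^6$, so the lower-order contribution $|\sum_{i<s}\beta^{p_i}|$ is at most $(s-1)|\beta|^{p_s - c(\alpha,s)} \leq |\beta|^{p_s}/(8 s^5)$. Comparing magnitudes in $\sum \beta^{x_i} = \sum \beta^{p_i}$ gives $|\beta|^{|p_s - \max_i x_i|} \leq 2s$; the factor of $3$ in the definition of $c(\alpha, s)$ makes the spacing strictly more than three times the threshold $\log(2s)/\log|\beta|$, and integrality of the exponents then forces $\max_i x_i = p_s$. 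Cancelling $\beta^{p_s}$ on both sides reduces the equation to one with $s - 1$ terms whose remaining exponents $p_1, \ldots, p_{s-1}$ are still spaced by at least $c(\alpha, s) \geq c(\alpha, s-1)$, so the induction hypothesis applies.

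The main obstacle will be making the dominance step work uniformly: for large $d$ the absolute value $|\beta|$ may be only barely greater than $1$, so the useful quantity is the $c$-step ratio $|\beta|^{c(\alpha,s)}$ rather than $|\beta|$ itself, and the factor of $3$ in $c(\alpha,s)$ is precisely what is needed for the approximate dominance bound to become an exact equality via integrality. A secondary subtlety is the symmetric case $\max_i x_i > p_s$, which I expect to rule out analogously by noting that $\beta^{\max_i x_i}$ would then dominate $\sum \beta^{x_i}$ by too much to be balanced against $\sum \beta^{p_i}$, again using $|\beta|^{c(\alpha,s)} \geq 8s^6$ to control the error.
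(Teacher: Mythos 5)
Your overall strategy---reduce to a vanishing lacunary polynomial and exploit the large gaps---is in the right spirit, but the specific mechanism you propose (archimedean dominance at a single large Galois conjugate) has several genuine gaps. First, the step ``$\log M(\alpha)\ge d\,\kappa(d)$, hence some conjugate $\beta$ satisfies $|\beta|\ge e^{\kappa(d)}$'' is false when $\alpha$ is not an algebraic integer: the Mahler measure $M(\alpha)=|a_0|\prod_i\max(1,|\alpha_i|)$ can be large purely because of the leading coefficient $a_0$. Concretely, for the root $\alpha\approx 1.0868$ of $6x^2-x-6$ (irreducible, not a root of unity), every conjugate of $\alpha$ and of $1/\alpha$ has modulus at most $1.087<e^{\kappa(2)}\approx 1.19$, so no such $\beta$ exists; and your proposed fix of passing to $a_0\alpha$ does not preserve the form of the equation $\sum\alpha^{x_i}=\sum\alpha^{p_i}$. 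Second, even granting such a $\beta$, your dominance estimate only yields $|p_s-\max_i x_i|\le \ln(2s)/\ln|\beta|$, a quantity that is at most of the order of $K=(\ln 2+2\ln s)/\kappa(d)$ and hence can be enormous for large $d$; ``integrality of the exponents'' forces nothing unless this bound is below $1$, so you cannot conclude $\max_i x_i=p_s$ and the cancellation/induction step collapses. Third, the case $\max_i x_i>p_s$ is not symmetric: the $x_i$ carry no spacing hypothesis, so $\sum_i\beta^{x_i}$ admits no lower bound of the form $\tfrac12|\beta|^{\max_i x_i}$ (several $x_i$ may coincide or nearly cancel at a complex or negative $\beta$).

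The paper avoids all of this by invoking Lenstra's gap theorem for lacunary polynomials (Lemma~\ref{le:Lenstra}), which is exactly the tool that converts a gap of size greater than $(\ln(k-1)+\ln\height(P))/\kappa(d)$ in the exponents of a polynomial $P$ with $P(\alpha)=0$ into a splitting $P=P_0+x^rP_1$ with $P_0(\alpha)=P_1(\alpha)=0$; that lemma already accounts for the leading-coefficient contribution to the Mahler measure and works at all places simultaneously, which is precisely what your single-conjugate argument cannot do. With it, one shows by pigeonhole that each $p_t$ must have exactly one $x_i$ within distance $K$ of it (otherwise a double application of the lemma would give $\alpha^{p_t}=0$), whence $P$ splits into binomials $x^{p_i}-x^{x_i}$ each vanishing at $\alpha$, forcing $\alpha$ to be a root of unity. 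If you want to salvage your approach, you would essentially need to reprove Lenstra's lemma, which is substantially harder than the dominance estimate you sketch.
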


For the proof, we make use of the following lemma,
proved by H. W. Lenstra, see \cite[Proposition 2.3]{Lenstra:1999}.
%We also use this lemma in Section \ref{se:semenov-np-equation}.
Note that Lenstra's original result is more general, as it applies to polynomials in $\CQ[x]$;
we only require the slightly weaker version for polynomials in $\MZ[x]$
stated below. For a polynomial $f(x) = \sum_{i=1}^h c_i x^i \in \MZ[x]$ define
$$
\height(f) = \max(|c_1|,\ldots,|c_h|).
$$

\begin{lemma}[Lenstra]\label{le:Lenstra}
Let $\alpha,d$ and $\kappa(d)$ be as
in Proposition \ref{pr:unique_solution_exp_eq}, and suppose that
a polynomial $P(x) = P_0(x) + x^r P_1(x) \in \MZ[x]$
contains $k$ monomials and satisfies
\begin{equation}\label{ineq:Lenstra}
r-\deg(P_0) > \frac{\ln (k-1) + \ln (\height(P))}{\kappa(d)}.
\end{equation}
Then $P(\alpha) = 0$ if and only if $P_0(\alpha)=0$ and $P_1(\alpha) = 0$.
\qed
\end{lemma}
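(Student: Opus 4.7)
The plan is to prove the non-trivial direction by contradiction: assuming $P(\alpha)=0$ and $P_1(\alpha)\neq 0$ (so that also $P_0(\alpha) = -\alpha^r P_1(\alpha)\neq 0$), I would derive an upper bound on $r - \deg(P_0)$ that violates the gap hypothesis \eqref{ineq:Lenstra}. The converse direction is immediate. The central tool is Dobrowolski's theorem, which gives a positive lower bound on the Mahler measure of an algebraic number $\alpha\in\CQ^*\setminus\CU$; the constant $\kappa(d)$ appears to be designed precisely so that $\log M(\alpha)\geq d\cdot\kappa(d)$.

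Let $\alpha = \alpha_1,\ldots,\alpha_d$ denote the Galois conjugates of $\alpha$ and let $a_d$ be the leading coefficient of the minimal polynomial $p_\alpha(x)\in\MZ[x]$. Applying each embedding to the identity $P_0(\alpha) = -\alpha^r P_1(\alpha)$ gives $P_0(\alpha_i) = -\alpha_i^r P_1(\alpha_i)$ for every $i$, and multiplying absolute values over $i$ yields
\[
\prod_{i=1}^{d}|P_0(\alpha_i)|\;=\;\Bigl(\prod_{i=1}^{d}|\alpha_i|\Bigr)^{r}\prod_{i=1}^{d}|P_1(\alpha_i)|.
\]
Each product $\prod_{i}P_j(\alpha_i)$ equals $\mathrm{Res}(p_\alpha,P_j)/a_d^{\deg P_j}$, which is an integer multiple of $a_d^{-\deg P_j}$; since we assume $P_1(\alpha)\neq 0$, the product $\prod_i|P_1(\alpha_i)|$ is a nonzero rational bounded below by $a_d^{-\deg P_1}$. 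The triangle inequality supplies the complementary upper bound
\[
|P_j(\alpha_i)|\leq (k-1)\cdot\height(P)\cdot\max(1,|\alpha_i|)^{\deg P_j},
\]
since $P_0$ and $P_1$ together account for the $k$ monomials of $P$, so each has at most $k-1$ nonzero terms.

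Next I would substitute the upper bound for $|P_0(\alpha_i)|$ and the lower bound for $\prod_i|P_1(\alpha_i)|$ into the displayed identity, splitting the product of $|\alpha_i|^r$ by whether $|\alpha_i|\geq 1$ or $|\alpha_i|<1$ so that the Mahler measure $M(\alpha) = |a_d|\prod_{|\alpha_i|\geq 1}|\alpha_i|$ emerges cleanly. The exponent $r$ on $|\alpha_i|^r$ should combine with the $\max(1,|\alpha_i|)^{\deg P_0}$ factors from the upper bound to reduce to $r - \deg P_0$, and after tracking the leading-coefficient contributions one arrives at
\[
M(\alpha)^{r - \deg P_0}\;\leq\;\bigl((k-1)\height(P)\bigr)^{d}.
\]
Taking logarithms and dividing by $d$ yields $(r - \deg P_0)\log M(\alpha)/d \leq \log(k-1) + \log\height(P)$. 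Dobrowolski's theorem gives $\log M(\alpha)\geq 2/(\log 3d)^3 = d\cdot\kappa(d)$ for $d\geq 2$, while for $d=1$, $\alpha$ is a reduced rational $p/q$ with $\alpha\notin\{0,\pm 1\}$, so $M(\alpha) = \max(|p|,|q|)\geq 2$ and $\log M(\alpha)\geq\log 2 = \kappa(1) = d\kappa(d)$. In either case $\log M(\alpha)/d \geq \kappa(d)$, so $(r - \deg P_0)\kappa(d)\leq \log(k-1) + \log\height(P)$, directly contradicting \eqref{ineq:Lenstra}.

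The main obstacle will be the careful Mahler-measure bookkeeping in the third paragraph: separating the product over conjugates cleanly into the inside-unit-circle and outside-unit-circle pieces so that the exponent of $M(\alpha)$ reduces to exactly $r - \deg(P_0)$ rather than merely $r$, and correctly tracking powers of $a_d$ when $\alpha$ fails to be an algebraic integer. With that accounting in place, the Dobrowolski input makes the contradiction essentially one line.
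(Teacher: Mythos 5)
The paper does not actually prove this lemma: it is quoted, with a \qed, from Lenstra \cite{Lenstra:1999} (Proposition 2.3), so your proposal must be judged on its own merits rather than against an in-paper argument. Your skeleton is the right one and is essentially Lenstra's: assume $P(\alpha)=0$ with $P_1(\alpha)\neq 0$ (hence $P_0(\alpha)=-\alpha^r P_1(\alpha)\neq 0$, and each of $P_0,P_1$ has between $1$ and $k-1$ monomials), bound $r-\deg(P_0)$ by $\bigl(\ln(k-1)+\ln\height(P)\bigr)/h(\alpha)$ where $h(\alpha)=\frac1d\ln M(\alpha)$, and invoke an explicit Lehmer-type lower bound $h(\alpha)\geq\kappa(d)$. (Minor attribution point: the bound $h(\alpha)\geq 2/(d(\ln 3d)^3)$ for $d\geq 2$ is Voutier's explicit refinement of Dobrowolski, which is what Lenstra uses; Dobrowolski's own constant is weaker.) Moreover, your computation is complete and correct in the special case where $\alpha$ is an algebraic integer, since then $\prod_i|P_1(\alpha_i)|\geq 1$ and $M(\alpha)=\prod_{|\alpha_i|>1}|\alpha_i|$, and the conjugate-product bookkeeping closes exactly as you describe.

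The genuine gap sits precisely at the point you flag as the ``main obstacle'': for non-integral $\alpha$ the powers of $a_d$ do \emph{not} cancel, and no amount of bookkeeping within your framework makes them cancel. The resultant lower bound $\prod_i|P_1(\alpha_i)|\geq|a_d|^{-\deg P_1}$ is a single global statement, so carrying out your plan yields
\begin{equation*}
M(\alpha)^{r-\deg(P_0)}\;\leq\;\bigl((k-1)\height(P)\bigr)^{d}\,|a_d|^{\,\deg(P_1)+r-\deg(P_0)},
\end{equation*}
and pairing the bounds the other way (lower-bounding $\prod_i|P_0(\alpha_i)|$ instead) gives $M(\alpha)^{r-\deg(P_0)}\leq\bigl((k-1)\height(P)\bigr)^{d}|a_0|^{r}$; in both cases the junk exponent grows with $r$, so the hypothesis \eqref{ineq:Lenstra} is not contradicted. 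The failure is structural, not cosmetic: take $\alpha=2/3$, whose only conjugate lies inside the unit circle, so the set $\{i:|\alpha_i|\geq1\}$ is empty, $M(\alpha)=|a_d|=3$, and your displayed product identity carries no usable archimedean information at all --- the true inequality $(r-\deg(P_0))\ln 3\leq\ln((k-1)\height(P))$ is driven entirely by the $3$-adic (denominator) behaviour of $\alpha$, which complex conjugates cannot see. The repair, and in essence Lenstra's actual argument, is to run your computation over \emph{all} places $v$ of $\MQ(\alpha)$ and apply the product formula to the nonzero number $P_1(\alpha)$: at finite $v$ the integrality of the coefficients gives the ultrametric bound $\|P_1(\alpha)\|_v\leq\max(1,\|\alpha\|_v)^{\deg P_1}$, and at finite $v$ with $\|\alpha\|_v>1$ the relation $P_1(\alpha)=-P_0(\alpha)\alpha^{-r}$ gives $\|P_1(\alpha)\|_v\leq\|\alpha\|_v^{\deg(P_0)-r}$; summing logarithms over all places yields $(r-\deg(P_0))\,h(\alpha)\leq\ln((k-1)\height(P))$, because the finite places with $\|\alpha\|_v>1$ contribute exactly the $\frac1d\ln|a_d|$ portion of $h(\alpha)$ that your resultant bound throws away. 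With that replacement your final paragraph (the Voutier/Dobrowolski input and the $d=1$ case) goes through verbatim.
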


\begin{proof}[Proof of Proposition \ref{pr:unique_solution_exp_eq}]
By way of contradiction, let $0\le p_1<p_2<\dots<p_s$  satisfy $p_{i+1}-p_{i} \geq c(\alpha, s)$, and suppose there is a solution $x_1\le \dots\le x_s\in\MZ$ to \eqref{eq:exp_equality} such that $\{x_1,\ldots,x_s\}\neq \{p_1,\ldots,p_s\}$. Multiplying by an appropriate power of $\alpha$ if necessary, we may assume that the $x_i$ are nonnegative. Eliminating terms that appear on both sides and reindexing, we obtain $x_1,\ldots,x_{s'}$ and $p_1,\ldots,p_{s'}$ such that $\label{eq:intersection}
\{x_1,\ldots,x_s\}\cap \{p_1,\ldots,p_s\} = \emptyset$ and $s' \leq s$. Hence, the polynomial
$$P(x) = \sum_{i=1}^s x^{p_i} - \sum_{i=1}^s x^{x_i}$$ has $k = 2s'$ monomials, with $k \leq 2s$, and satisfies $P(\alpha) = 0$. Since the coefficients of $P$ are bounded in absolute value by $s$ (this bound is realized if $s=s'$ and $x_1 = \cdots = x_s$), we have $\height(P) \leq s$. Hence, the right-hand side of the inequality in Lemma \ref{le:Lenstra} corresponding to $P$ is less than $$K = \frac{\ln 2 + 2\ln s}{\kappa(d)},$$
and we notice that $c(\alpha,s) = \lceil 3K\rceil \geq 3K$. Now suppose that there is some $t\in \{1,\ldots,s\}$ such that none of the $x_i$ are contained in the interval $[p_t - K, p_t + K]$. Applying Lemma \ref{le:Lenstra} twice, this implies that $\alpha^{p_t} = 0$, a contradiction. Thus, by a pigeonhole argument there is exactly one $x_i$ contained in each of the $s$ disjoint $K$-neighborhoods of $p_1,\ldots,p_s$. This means that $P(x)$ is a sum of $s$ polynomials of the form $B_i(x) = x^{p_i}-x^{x_i}$, such that $$\min(p_{i+1},x_{i+1}) - \max(p_i,x_i) > K.$$ Hence, Lemma \ref{le:Lenstra} shows that $B_1(\alpha) = 0$, implying that $\alpha$ is a root of unity, which contradicts our assumption.
\end{proof}

Fix $\alpha\in\CQ^\ast\setminus\CU$.
Below we prove $\NP$-hardness of the Diophantine problem
for \eqref{eq:semenov} by reducing the $3$-partition problem
to the decidability  of \eqref{eq:semenov}.
For a given multiset $S=\{a_1,\ldots,a_{3k}\}$ of $3k$ integers, define
$$
L = \frac{1}{k}\sum_{i=1}^{3k}a_i.
$$
The $3$-partition problem  (abbreviated as $\TPART$)
is the problem of deciding whether an integer multiset $S=\{a_1,\ldots,a_{3k}\}$, where $L/4 < a_i < L/2$, can be partitioned into $k$ triples, each of which sums to $L$.
This problem is known to be \emph{strongly} $\NP$-complete, which means that it remains $\NP$-complete even when the input is represented in unary. A thorough treatment of this problem may be found in \cite{Garey-Johnson:1979}.
Below we reduce an instance of $\TPART$ to an instance of the Diophantine problem for \eqref{eq:semenov}. Note that because of the restriction $L/4 < a_i < L/2$,
we may assume that $S$ contains only positive integers.

Let $S=\{a_1,\ldots,a_{3k}\}$ be an instance of
$\TPART$, with $L = \frac{1}{k}\sum_{i=1}^{3k} a_i$
the anticipated sum and $L/4 < a_i < L/2$ (we may assume that $L\in\MN$). Let $c = c(\alpha, Lk)$ be defined as in Proposition \ref{pr:unique_solution_exp_eq}, and define the numbers
\begin{align*}
q_y &= 1+\alpha^c+\alpha^{2c}+\dots+\alpha^{(y-1)c}
\ \ \ \ \ \ \mbox{ for }\ \  y\in\MN\\
r &= q_L\rb{1+\alpha^{2cL}+\alpha^{4cL}+\dots+\alpha^{2(k-1)cL}}
\end{align*}
and the equation
\begin{equation}\label{eq:3-part}
q_{a_1}\alpha^{x_1}+\cdots+q_{a_{3k}}\alpha^{x_{3k}} = r.
\end{equation}

\begin{proposition}\label{pr:3-part}
$S$ is a positive instance of $\TPART$
if and only if \eqref{eq:3-part} has a solution.
Furthermore, a solution $x_1,\ldots,x_{3k}$ for \eqref{eq:3-part},
if it exists, is unique up to a permutation and satisfies
\begin{equation}\label{3PART-reduction-bound}
 0\le x_1,\ldots,x_{3k}\le 2ckL.
\end{equation}

\end{proposition}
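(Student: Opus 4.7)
The plan is to prove both directions of the biconditional, and to extract the bound and uniqueness from the structural analysis of the reverse direction.

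For the forward direction, suppose $S=T_1\sqcup\cdots\sqcup T_k$ is a valid 3-partition with $\sum_{i\in T_m}a_i=L$. Order each $T_m$ arbitrarily as $(i_{m,1},i_{m,2},i_{m,3})$ and set
$$x_{i_{m,t}}=2(m-1)cL+\bigl(a_{i_{m,1}}+\cdots+a_{i_{m,t-1}}\bigr)c.$$
Using $q_y\alpha^{x}=\sum_{j=0}^{y-1}\alpha^{x+jc}$, the three terms from $T_m$ contribute consecutive blocks of exponents that concatenate into $\{2(m-1)cL+jc : 0\le j<L\}$, so the triple sums to $q_L\alpha^{2(m-1)cL}$. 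Summing over $m$ recovers $r$, and the $x_i$ satisfy $0\le x_i\le 2(k-1)cL+(L-1)c<2ckL$.

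For the reverse direction, I fully expand \eqref{eq:3-part} as sums of powers of $\alpha$. The LHS becomes $\sum_{i=1}^{3k}\sum_{j=0}^{a_i-1}\alpha^{x_i+jc}$, a sum of $\sum_i a_i = kL$ powers (possibly with repetition), while the RHS is $\sum_{m=0}^{k-1}\sum_{j=0}^{L-1}\alpha^{2mcL+jc}$, a sum of $kL$ pairwise distinct powers. Sorted in increasing order, the RHS exponents have consecutive differences at least $c=c(\alpha,kL)$: the gap within a block is exactly $c$, and between consecutive blocks it is $(L+1)c$. Applying Proposition \ref{pr:unique_solution_exp_eq} with $s=kL$, the multiset of LHS exponents must equal the set of RHS exponents. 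In particular, every $x_i+jc$ is an RHS exponent, hence a multiple of $c$, so each $x_i$ is a nonnegative multiple of $c$; write $x_i=cy_i$.

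Dividing through by $c$, the intervals $I_i=\{y_i,\ldots,y_i+a_i-1\}$ partition (as sets) the disjoint union $\bigsqcup_{m=0}^{k-1}\{2mL,\ldots,2mL+L-1\}$. Since each $a_i<L/2$ while consecutive RHS blocks are separated by a gap of $L+1$, every $I_i$ must sit inside a single block $[2mL,2mL+L-1]$. Grouping the indices by the containing block produces a partition $T_1,\ldots,T_k$ of $\{1,\ldots,3k\}$ with $\sum_{i\in T_m}a_i=L$; the constraint $L/4<a_i<L/2$ then forces $|T_m|=3$, yielding the desired 3-partition. The bound $x_i\le 2ckL$ follows because $y_i\le 2(k-1)L+L-a_i<2kL$, and uniqueness up to permutation is a direct corollary of the structural analysis: any solution is determined by its induced 3-partition together with an ordering within each triple.

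The principal technical step is applying Proposition \ref{pr:unique_solution_exp_eq} correctly despite the LHS possibly containing repeated exponents; this is handled by counting $kL$ terms on both sides (with multiplicity on the LHS) and observing that the constant $c=c(\alpha,kL)$ is exactly what is needed to meet the gap hypothesis with $s=kL$.
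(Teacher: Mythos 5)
Your proof is correct and follows essentially the same route as the paper: the identical explicit construction for the forward direction, and for the converse an application of Proposition~\ref{pr:unique_solution_exp_eq} with $s=kL$ (correctly noting that the left-hand side may a priori have repeated exponents) followed by a combinatorial matching of exponents. The only difference is presentational---you organize the matching as an interval-partition argument after dividing by $c$, whereas the paper peels off the powers sequentially---and your handling of the uniqueness claim is at the same (informal) level as the paper's.
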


\begin{proof}
Suppose that $S$ is a positive instance of $\TPART$.
Reindexing the $a_i$ and $x_i$ if necessary, we may assume that $\sum_{j=1}^3 a_{3i+j} = L$ for $i = 0,1,\ldots, k-1$. It is now easily checked that
\begin{align*}
x_{3i+1} &= 2icL \\
x_{3i+2} &= c(2iL + a_{3i+1}) \\
x_{3i+3} &= c(2iL + a_{3i+1} + a_{3i+2})
\end{align*}
for $i = 0,1,\ldots,k-1$
satisfies \eqref{eq:3-part} and \eqref{3PART-reduction-bound}.

For the other direction, suppose that $x_1,\ldots,x_{3k}$ is a solution
of \eqref{eq:3-part}.
By construction, the left-hand side of (\ref{eq:3-part}) is
a sum of $Lk$ powers of $\alpha$, while
the right-hand side is a sum of $Lk$ distinct powers of $\alpha^c$. In particular, the sum on the right-hand side contains blocks of consecutive powers of $\alpha^c$, with gaps between $\alpha^{(2i-1)c(L-1)}$ and $\alpha^{2icL}$ for $i = 1,\ldots,k-1$. Proposition  \ref{pr:unique_solution_exp_eq} implies
that the left-hand side consists of the same distinct
powers of $\alpha$, and the proof follows from a careful comparison of the powers on each side. First of all, it is clear that we must have $x_{i_1} = 0$ for exactly one $x_{i_1}$,
and
$q_{a_{i_1}}\alpha^{x_{i_1}} =
1 + \alpha^c + \ldots + \alpha^{c(a_{i_1}-1)}$.
Since $ca_{i_1} < cL$ by assumption, the right-hand side of \eqref{eq:3-part} contains $\alpha^{ca_{i_1}}$ and so we must have $x_{i_2} = ca_{i_1}$ for some (unique) $x_{i_2}$.
Similarly, the highest degree term of
$q_{a_{i_1}}\alpha^{x_{i_1}} + q_{a_{i_2}}\alpha^{x_{i_2}}$
is $\alpha^{c(a_{i_1}+a_{i_2}-1)}$, and since $c(a_{i_1}+a_{i_2}) < cL$, we must have the next consecutive power of $\alpha^c$ on the left-hand side. Hence, there must be
$x_{i_3} = c(a_{i_1}+a_{i_2})$. Finally, the highest power
of $q_{a_{i_1}}\alpha^{x_{i_1}} + q_{a_{i_2}}\alpha^{x_{i_2}} + q_{a_{i_3}}\alpha^{x_{i_3}}$ is $\alpha^{c(a_{i_1}+a_{i_2} + a_{i_3}-1)}$, which must be the last term in the first block of consecutive powers of $\alpha^c$ (since for any $a_l$ we have $cL < c(a_{i_1}+a_{i_2}+a_{i_3} + a_l)$). This implies that $c(a_{i_1}+a_{i_2}+a_{i_3}) = cL$, and the next largest $x_i$ is equal to $2cL$. It is clear that this process may be continued to show that $S$ is a positive instance, and that $x_1,
\ldots,x_{3k}$ satisfies \eqref{3PART-reduction-bound}.
\end{proof}

\begin{corollary}\label{co:semenov-NP-hard}
For $\alpha\in\CQ\setminus\{0,1\}$, the Diophantine problem for equations \eqref{eq:semenov}
is $\NP$-hard.
\end{corollary}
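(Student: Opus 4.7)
The plan is to deduce the corollary from the two cases handled earlier in this section, splitting according to whether or not $\alpha$ is a root of unity. If $\alpha\in\CU$, then $\NP$-hardness has already been established (as part of Corollary \ref{pr:ROU}). If $\alpha\notin\CU$, I would invoke Proposition \ref{pr:3-part}, which attaches to each instance $S = \{a_1,\ldots,a_{3k}\}$ of $\TPART$ an equation \eqref{eq:3-part} that has a solution exactly when $S$ is a positive instance; this is a many-one reduction from $\TPART$ to the Diophantine problem for \eqref{eq:semenov}.

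The only substantive step is to verify that this reduction runs in polynomial time. Since $\TPART$ is strongly $\NP$-complete, it suffices to show that \eqref{eq:3-part} has encoding size polynomial in the unary size of $S$, which is $\Theta(kL)$. Because $\alpha$ is fixed, the parameters $d = \deg(\alpha)$ and $\kappa(d)$ are constants, so the gap $c = c(\alpha, Lk)$ is $O(\log(Lk))$, and the exponents of $\alpha$ appearing in the coefficients $q_{a_i}$ and in the right-hand side $r$ are $O(kLc)$, which is polynomial in $kL$.

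The main (mild) obstacle is bounding the bit-sizes of the rational numbers produced when one reduces each $q_{a_i}$ and $r$ modulo the minimal polynomial $p_\alpha$ in order to express them as vectors in $\MQ^d$, as required by the input encoding described in Section \ref{se:encoding}. I would handle this by computing $\alpha^n \bmod p_\alpha$ via repeated squaring; standard estimates, using that $\height(p_\alpha)$ is a constant depending only on $\alpha$, show that the coefficients produced have bit-length $O(\log n)$. Summing contributions across the $O(kL)$ monomials appearing in the $q_{a_i}$ and $r$ then yields a total encoding of size polynomial in $kL$, which is polynomial in the unary size of $S$ and hence establishes the desired polynomial-time reduction.
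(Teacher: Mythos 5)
Your argument has exactly the structure of the paper's proof: split on whether $\alpha\in\CU$, cite Corollary~\ref{pr:ROU} in the root-of-unity case and Proposition~\ref{pr:3-part} otherwise, and use the strong $\NP$-completeness of $\TPART$ to reduce the polynomial-time check to showing that the encoding of \eqref{eq:3-part} has size polynomial in the unary size $\Theta(kL)$ of $S$. The extra care you take here is welcome, since the paper simply asserts that the reduction is a polynomial-time Karp reduction; your bounds $c=O(\log(Lk))$ and exponents $O(kLc)$ are correct. One intermediate estimate is wrong, though it does not sink the argument: writing $\alpha^n$ in the power basis $1,\alpha,\ldots,\alpha^{d-1}$ produces rational coordinates of bit-length $\Theta(n)$ in general, not $O(\log n)$. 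Repeated squaring uses only $O(\log n)$ ring operations, but each squaring roughly doubles the coefficient length; already for $\alpha=2$ one has $\alpha^n=2^n$ with $n+1$ bits, and for $\alpha=\sqrt{2}$ the coordinate of $\alpha^{2m}$ is $2^m$. Fortunately the exponents occurring in the coefficients $q_{a_i}$ and in $r$ are $O(kLc)=O(kL\log(kL))$, so the corrected linear-in-$n$ bound still yields coefficient vectors of bit-length polynomial in $kL$, and the reduction remains polynomial-time in the unary size of $S$; with that repair your proof is complete and matches the paper's.
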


\begin{proof}
If $\alpha\in\CQ\setminus\{0,1\}$ is not a root of unity, Proposition \ref{pr:3-part} provides a polynomial-time (Karp) reduction from $\TPART$
to the Diophantine problem for \eqref{eq:semenov}. The case where $\alpha\in \CU$ was proved in Proposition \ref{pr:ROU}.
\end{proof}

\section{Complexity upper bound for a single equation}
\label{se:semenov-np-equation}

In this section, we prove that the Diophantine problem for single algebraic equations with exponents is in $\NP$. The following lemma is required to establish a polynomial time procedure for checking a candidate solution. Note that the proof extends easily to the case of a system of equations; however, for convenience we state it for a single equation.

\begin{lemma}\label{le:check-sol}
Consider an equation $E$ of type \eqref{eq:semenov}. Let $\ovz = (z_1,\ldots,z_k)\in \MZ^k$, and set
\begin{align*}
M &= \max\{1,|z_1|,\ldots,|z_k|\}, \\
\CM(E,\ovz) &= \max \{\size(E),\log M \}.
\end{align*}
Then there is an algorithm that checks whether $x_i = z_i$ is a solution of $E$ in time polynomial in $\CM(E,\ovz)$.
\end{lemma}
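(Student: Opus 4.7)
The plan is to avoid ever computing the algebraic numbers $\alpha^{z_i}$ explicitly---in the standard basis $1,\alpha,\ldots,\alpha^{d-1}$ of $\MQ(\alpha)$ their coordinates can have bit length linear in $|z_i|$, hence exponential in $\CM(E,\ovz)$---and instead to recast the equation as a vanishing statement for a sparse integer polynomial in one variable, which can be handled by a divide-and-conquer use of Lenstra's Lemma.

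I would begin by writing each coefficient $q_i \in \MQ(\alpha)$ as $q_i = r_i(\alpha)$ with $r_i(x) \in \MQ[x]$ of degree $< d$, directly from the input. Setting $N = -\min\{0, z_1, \dots, z_k\}$ and
\[
\tilde P(x) = \sum_{i=1}^k r_i(x)\, x^{z_i+N} \;-\; r_0(x)\, x^N,
\]
the equation $E$ is satisfied at $\ovz$ if and only if $\tilde P(\alpha) = 0$, since $\alpha \neq 0$. After clearing denominators, $\tilde P$ becomes a sparse polynomial in $\MZ[x]$ with at most $(k+1)d$ nonzero terms, exponents in $[0, 2M+d]$, and integer coefficients of bit length $O(\size(E))$. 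In particular, the sparse encoding of $\tilde P$ has size polynomial in $\CM(E,\ovz)$, even though its degree may be exponential in $\CM(E,\ovz)$.

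After combining like monomials and sorting the distinct exponents as $e_1 < e_2 < \dots < e_t$, with $t \le (k+1)d$, I would set
\[
K = \frac{\ln(t-1) + \ln \height(\tilde P)}{\kappa(d)},
\]
which is polynomial in $\CM(E,\ovz)$. Whenever two consecutive exponents satisfy $e_{j+1} - e_j > K$, Lenstra's Lemma applied to the decomposition $\tilde P = \tilde P_0 + x^{e_{j+1}} \tilde P_1$ reduces $\tilde P(\alpha) = 0$ to the simultaneous tests $\tilde P_0(\alpha) = 0$ and $\tilde P_1(\alpha) = 0$, and I would treat these recursively. Since each sub-polynomial produced has strictly fewer monomials and height at most $\height(\tilde P)$, the top-level threshold $K$ remains a valid bound throughout, so after at most $t - 1$ splits the recursion terminates at leaves $\tilde P^{(1)}, \dots, \tilde P^{(m)}$ in which every gap between consecutive exponents is bounded by $K$; hence each leaf has degree $O(tK)$, polynomial in $\CM(E, \ovz)$.

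Each leaf $\tilde P^{(\ell)}$ can then be tested by computing the remainder of $\tilde P^{(\ell)}(x)$ modulo $p_\alpha(x)$ in $\MQ[x]$ via long division and checking whether it is the zero polynomial. Because the leaf has polynomial degree and polynomial-sized coefficients and $p_\alpha$ has polynomial size, the intermediate quotients and remainders during division remain of polynomial bit length, so the whole reduction runs in polynomial time. The main conceptual hurdle I anticipate is precisely the observation that the verification---which naively would involve exponentially large algebraic numbers---can be phrased as a sparse polynomial vanishing check amenable to Lenstra's Lemma; the remaining work is routine bookkeeping, such as verifying that clearing denominators and shifting by $N$ leave $\height(\tilde P)$ bounded by $2^{O(\size(E))}$ and that the long division at the leaves does not blow up the coefficient sizes.
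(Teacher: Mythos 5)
Your reduction of the verification task to testing whether a sparse integer polynomial vanishes at $\alpha$ is exactly the first half of the paper's proof (clear denominators, shift the exponents to make them nonnegative, form the lacunary polynomial). Where you diverge is in how that test is then carried out: the paper simply invokes Theorem~2.1 of Lohrey--Zetzsche--Ganardi \cite{Lohrey:2020}, which places the problem ``does $F(\alpha)=0$ for sparse $F$ and $\alpha$ given by its minimal polynomial'' in $\mathbf{TC}^0$, whereas you give a self-contained polynomial-time algorithm by repeatedly splitting $\tilde P$ at gaps exceeding the Lenstra threshold $K$ and finishing each leaf by division modulo $p_\alpha$. Your divide-and-conquer is sound: sub-polynomials have no more monomials and no larger height, so the top-level $K$ remains a valid splitting threshold, and after pulling out the common power of $x$ from each leaf (legitimate since $\alpha\neq 0$ --- note that without this normalization the leaves still have exponentially large degree, so the step should be made explicit) the dense division argument goes through with polynomially bounded intermediate coefficients. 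This buys independence from the cited $\mathbf{TC}^0$ result at the cost of a coarser complexity bound, and it reuses Lemma \ref{le:Lenstra}, which the paper needs anyway.

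There is, however, one genuine hole: Lemma \ref{le:Lenstra} is stated (and is only true) for $\alpha\in\CQ^\ast\setminus\CU$, while Lemma \ref{le:check-sol} must cover roots of unity --- it is invoked in the proof of Corollary \ref{pr:ROU} precisely in that case, and the gap principle fails there (for instance $x^n-1$ vanishes at a primitive $n$th root of unity for every $n$, with no bound on the gap between its two monomials). Your recursion therefore never gets off the ground when $\alpha\in\CU$. The fix is cheap: detect the root-of-unity case from $p_\alpha$ in polynomial time (as the paper does elsewhere via \cite{Bradford-Davenport:1988}), reduce all exponents of $\tilde P$ modulo the order $n$ of $\alpha$, which satisfies $n<2\deg(\alpha)^2$, and then the resulting polynomial already has polynomially bounded degree and can be tested directly by division modulo $p_\alpha$. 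With that case added, your proof is complete.
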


\begin{proof}
Let $d = \deg(\alpha)$, so that each
$q_i$ is equal to $\sum_{j=0}^{d-1}r_{ij}\alpha^{j}$
for some $r_{ij} \in \MQ$.
Multiplying $E$ by the product of all denominators,
which is less than $2^{\size(E)}$,
we can ensure that each $r_{ij}\in \MZ$. This can be done in time polynomial in $\size(E)$, and the new equation $E'$ satisfies $\size(E') = O(\size^2(E))$.

Set $\underline{m} = \min\{z_1,\ldots,z_k\}$ and $\overline{m}= \max\{z_1, \ldots,z_k\}$, and let $z_i' = z_i-\underline{m}$ (so that each $z_i'$ is non-negative and $\min z_i' = 0$); note that $z_1',\ldots,z_k'$ may be computed in time polynomial in $\log M$. Define $f_i(x) = \sum_{j=0}^{d-1}r_{ij} x^{j}$ for $i=0,\dots,k$, and
$$
F(x) = f_1(x)x^{z_1'}+\cdots+f_k(x)x^{z_k'} - f_0(x)\ \in\ \MZ[x].
$$
To compute the coefficients of $F(x)$, we must perform comparisons and
addition on $d(k+1)$ terms, with coefficients and exponents encoded
as binary numbers of length bounded by $\size(E)$ and
$\log d + \log(\overline{m}-\underline{m}+1)$, respectively.
That can be done in time polynomial in $\CM(E,\ovz)$, because
$d,k$ are bounded by $\size(E)$. In this way, we end up with the \emph{sparse representation} of $F$, i.e.,
the list of pairs $(c_n,n)$, where $c_n$ is the coefficient of $x^n$ in $F(x)$ and $c_n \neq 0$, with $c_n$ and $n$ given in binary. Moreover, it is encoded in space that is polynomial in $\CM(E,\ovz)$.

By construction, $z_1',\dots,z_k'$
satisfy \eqref{eq:semenov} if and only if $\alpha$ is a zero of $F(x)$.
By \cite[Theorem 2.1]{Lohrey:2020}, the problem to decide
if $F(\alpha)=0$ (where the input consists of $F(x)$ and the minimal polynomial of $\alpha$, encoded as above) belongs to $\bf{TC}^0$. In particular, it is polynomial-time decidable, so the result follows.
\end{proof}
%More precisely, given an equation (or system of equations) $E_1$ of the form \eqref{eq:semenov}, then $E_1$ may be reduced to an equation (or system) $E_2$ of the form \eqref{eq:semenov3} such that $\size(E_2)$ is $O(\size^2(E_1))$, in nearly linear time. Hence, we mostly consider equations \eqref{eq:semenov3}.
Let $\alpha\in\CQ^*\setminus \CU$, and let $d = \deg(\alpha)$. Consider, instead of (\ref{eq:semenov}), the equation
\begin{equation}\label{eq:semenov3}
q_1\alpha^{x_1}+\dots+q_k\alpha^{x_k}= 0
\end{equation}
with coefficients $q_1,\ldots,q_k\in\MZ[\alpha]\setminus \{0\}$, and each coefficient given as
$$
q_i = r_{i0}+r_{i1}\alpha + \cdots + r_{i(d-1)}\alpha^{d-1}, \ \ r_{ij}\in \MZ.
$$
We call \eqref{eq:semenov3} a \emph{homogeneous} equation with exponents.
The Diophantine problem for \eqref{eq:semenov3}
is easily seen to be polynomial-time equivalent (in terms of $\size(E)$) to
the Diophantine problem for \eqref{eq:semenov}.

\subsection{Block structure of a solution}

Let $x_1,\ldots,x_k\in\MZ$ be a solution for \eqref{eq:semenov3}.
Denote the vector of the solution $(x_1,\ldots,x_k)\in\MZ^k$ by $\ovx$.
A nonempty set $I\subseteq \{1,\ldots,k\}$ is called a \emph{block}
for $\ovx$ if the following conditions hold:
\begin{itemize}
\item[(B1)]
$\sum_{i\in I} q_i \alpha^{x_i}=0$;
\item[(B2)]
$I$ does not have a nonempty proper subset satisfying (B1).
\end{itemize}
For a given solution $\ovx$, the set of indices $\{1,\ldots,k\}$ can be represented
as a disjoint union $\bigsqcup_{j=1}^m I_j$ of blocks,
perhaps in more than one way.
Such a collection $\CI = \{I_1,\ldots,I_m\}$ is called a \emph{block structure} for a solution $\ovx$.
For a block $I$, define
\begin{itemize}
\item
$\Delta_I=(\delta_1,\ldots,\delta_k)\in\MZ^k$, where
$$
\delta_i=
\begin{cases}
1&\mbox{if }i\in I\\
0&\mbox{if }i\notin I
\end{cases},
$$
\item
$\ovx_I=\Set{x_i}{i\in I}\subseteq \MZ$.
\item
$\spn_I(\ovx) = \max (\ovx_I)-\min (\ovx_I)$, called the \emph{span} of $I$.
\end{itemize}
The next lemma follows
immediately from the definition of $\Delta_I$.

\begin{lemma}\label{le:shift-blocks}
If $\{I_1,\ldots,I_m\}$ is a block structure for
a solution $\ovx$ for \eqref{eq:semenov3}, then
$$
\ovx+b_1\Delta_{I_1}+\dots+b_m\Delta_{I_m}
$$
is a solution for \eqref{eq:semenov3} for any $b_1,\ldots,b_m\in\MZ$.
\qed
\end{lemma}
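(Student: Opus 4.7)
The plan is to verify directly that, after the proposed shift, each block's contribution to the left-hand side of \eqref{eq:semenov3} is scaled by a single power of $\alpha$, so the block sum remains zero. Since the blocks partition $\{1,\ldots,k\}$, summing block-wise gives the full equation.

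Concretely, for each $i \in \{1,\ldots,k\}$, let $j(i)$ denote the unique index with $i \in I_{j(i)}$; this is well-defined because $\CI$ is a partition. Writing $\ovx' = \ovx + b_1\Delta_{I_1}+\dots+b_m\Delta_{I_m}$, the definition of $\Delta_{I_j}$ gives $x_i' = x_i + b_{j(i)}$. Substituting into the left-hand side of \eqref{eq:semenov3} and regrouping the terms according to the block decomposition, I would compute
\begin{equation*}
\sum_{i=1}^k q_i \alpha^{x_i'} \;=\; \sum_{j=1}^m \sum_{i\in I_j} q_i \alpha^{x_i+b_j} \;=\; \sum_{j=1}^m \alpha^{b_j} \sum_{i\in I_j} q_i \alpha^{x_i}.
\end{equation*}
By condition (B1) of the block structure, each inner sum $\sum_{i\in I_j} q_i \alpha^{x_i}$ vanishes, so the whole expression equals $0$, as required.

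There is essentially no obstacle: the argument is a one-line consequence of the distributive law together with (B1). Property (B2) is not used at all in this lemma; it is only the block-wise vanishing from (B1) and the partition property of $\CI$ that matter. The shifts $b_j$ can be arbitrary integers precisely because multiplying a zero sum by $\alpha^{b_j}$ (with $\alpha \neq 0$) preserves zero.
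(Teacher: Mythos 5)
Your proof is correct and is exactly the computation the paper has in mind: the paper states that the lemma ``follows immediately from the definition of $\Delta_I$'' and gives no further argument, and your block-wise regrouping with (B1) is that immediate argument spelled out. Your side remarks---that (B2) is not needed and that $\alpha\neq 0$ is what allows arbitrary integer shifts---are both accurate.
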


\subsection{Gap and maximum span of an equation}

Consider an equation $E$ of type \eqref{eq:semenov3}.
Define $\gap(E)$ to be the least $n\in\MN$ such that for every solution
$x_1,\dots,x_k \in \MZ$  for \eqref{eq:semenov3}
and for every partition $\{1,\dots,k\} = S_1\sqcup S_2$
the following holds:
\begin{equation}\label{eq:gap-def}
\min_{i\in S_2}x_i  - \max_{i\in S_1}x_i >n
\ \ \Rightarrow\ \ S_1\mbox{ and } S_2 \mbox{ are unions of blocks.}
\end{equation}
If $E$ has no solutions, then set $\gap(E)=0$. The following lemma shows that $\gap(E)$ is well-defined.

\begin{lemma}\label{le:gap-bound}
For equations of type \eqref{eq:semenov3}, $\gap(E)=O(\size^3(E))$. If the base $\alpha$ is fixed, or if $\alpha$ is restricted to rational values, then $\gap(E)=O(\size(E))$.
\end{lemma}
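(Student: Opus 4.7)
The plan is to reduce the statement to a single application of Lenstra's Lemma (Lemma \ref{le:Lenstra}). The key observation is that, since $q_i = \sum_{j=0}^{d-1} r_{ij}\alpha^j$ with $r_{ij}\in \MZ$ and $d = \deg(\alpha)$, any solution $\ovx = (x_1,\ldots,x_k)$ of $E$, shifted (via multiplication by a suitable power of $\alpha$) so that all $x_i \geq 0$, gives rise to the integer polynomial
$$P(y) = \sum_{i=1}^k \sum_{j=0}^{d-1} r_{ij}\, y^{x_i+j}\ \in\ \MZ[y]$$
with $P(\alpha) = 0$.

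Given a partition $\{1,\ldots,k\} = S_1 \sqcup S_2$ with $\min_{i \in S_2} x_i - \max_{i \in S_1} x_i > n$, I would split $P = P_0 + y^r P_1$ where $r = \min_{i \in S_2} x_i$, so that $P_0$ gathers the monomials coming from indices in $S_1$ and $y^r P_1$ those from $S_2$. Since $\deg(P_0) \leq \max_{i \in S_1} x_i + (d-1)$, the hypothesis \eqref{ineq:Lenstra} of Lenstra's Lemma is satisfied as soon as
$$n > (d-1) + \frac{\ln(kd-1) + \ln(\height(P))}{\kappa(d)}.$$
For such $n$, Lenstra's Lemma yields $P_0(\alpha) = P_1(\alpha) = 0$, which, after undoing the factor $y^r$, translates to $\sum_{i \in S_j} q_i \alpha^{x_i} = 0$ for $j = 1,2$. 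Since any zero-summing subset decomposes as a union of blocks (by iteratively extracting a minimal zero-summing subset and inducting on the cardinality), $S_1$ and $S_2$ are each unions of blocks, as required by \eqref{eq:gap-def}.

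The final step is to bound the right-hand side above by the claimed asymptotics. By construction, $P$ has at most $kd$ monomials, $\height(P) \leq k\cdot \max_{i,j}|r_{ij}|$ is bounded by $2^{O(\size(E))}$, and $k, d \leq \size(E)$. In the general case, $\kappa(d)^{-1} = O(d(\log d)^3)$, which combined with $\ln\height(P) = O(\size(E))$ yields $n = O(\size^2(E)(\log \size(E))^3) = O(\size^3(E))$. When the base $\alpha$ is fixed, $d$ and hence $\kappa(d)$ are constants, and in the rational case $d = 1$ with $\kappa(1) = \ln 2$; in both these specializations the bound reduces to $O(\size(E))$. The main technical delicacy lies in bookkeeping the Lenstra constants carefully enough to match the stated exponents of $\size(E)$; no ingredient beyond Lemma \ref{le:Lenstra} is required.
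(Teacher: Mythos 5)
Your proposal is correct and follows essentially the same route as the paper: form the integer polynomial $P(y)=\sum_{i,j}r_{ij}y^{x_i+j}$ from a (shifted) solution, split it at $r=\min_{i\in S_2}x_i$, apply Lenstra's Lemma once to conclude $\sum_{i\in S_j}q_i\alpha^{x_i}=0$ for $j=1,2$, and then bound the Lenstra threshold by $O(\size^3(E))$ in general and $O(\size(E))$ when $d$ is fixed or $d=1$. Your explicit remark that a zero-summing subset decomposes into blocks by iterated extraction of minimal zero-summing subsets is a slightly cleaner justification of the final step than the paper's phrasing, but the substance is identical.
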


\begin{proof}
If $E$ has no solutions, then $\gap(E)=0$ and
the statement holds.
Let $\ovx=(x_1,\dots,x_k)\in\MZ^k$ be a solution for \eqref{eq:semenov3}
and $\CI$ a block structure for $\ovx$. As in the proof of Lemma \ref{le:check-sol}, construct the
polynomial $F(x)\in\MZ[x]$ corresponding to $\ovx$. The number of monomials in $F$ is not greater than $dk$
and
$$
\height(F)\leq  \sum_{i=1}^k\sum_{j=0}^{d-1}|r_{ij}|,
$$
which gives the following upper bound
on the ``gap bound'' (i.e. the right-hand side of inequality \eqref{ineq:Lenstra}) of Lemma \ref{le:Lenstra} for $F(x)$:
\begin{align*}
&\tfrac{d\ln^3(3d)}{2}
\bigg(\ln(kd-1)+\ln \bigg(\sum_{i=1}^k\sum_{j=0}^{d-1}|r_{ij}|\bigg)\bigg)\\
\le&
\tfrac{d\ln^3(3d)}{2}
\bigg(\ln(k)+\ln(d)+\ln\bigg(\prod_{i=1}^k\prod_{j=0}^{d-1}(|r_{ij}|+1)\bigg)\bigg)\\
\le&
\underbrace{\tfrac{d\ln^3(3d)}{2}}_{O(\size^2(E))}
\underbrace{\bigg(\ln(k)+\ln(d)+\sum_{i=1}^k\sum_{j=0}^{d-1}\ln(|r_{ij}|+1)\bigg)}_{O(\size(E))}.
\end{align*}
By construction, each monomial in $F(x)$ is of the form $cx^{x_{i}+j}$, where $0\le j<d$.
Hence, for a partition $\{1,\dots,k\} = S_1\sqcup S_2$
satisfying
\begin{equation}\label{eq:gap-ineq}
\min_{i\in S_2}x_i  - \max_{i\in S_1}x_i >
d+\tfrac{d\ln^3(3d)}{2}
\bigg(\ln(k)+\ln(d)+\sum_{i=1}^k\sum_{j=0}^{d-1}\ln(|r_{ij}|+1)\bigg),
\end{equation}
we may write $F(x)$ as a sum $F_1(x)+F_2(x)$ which satisfies the assumptions of Lemma \ref{le:Lenstra}, where $F_1$ contains the terms of degree less than $d+ \max_{i\in S_1}x_i$ and $F_2$ contains the terms of degree at least $\min_{i\in S_2}x_i$. Therefore, Lemma \ref{le:Lenstra} implies that $F_1(\alpha) = F_2(\alpha) = 0$, so each block in $\CI$ belongs either to $S_1$ or to $S_2$.
The right-hand side of \eqref{eq:gap-ineq} is $O(\size^3(E))$, which completes the proof in the general case (where $\alpha$ is part of the input). In the case where $\alpha$ (hence, $d$) is fixed, the right-hand side of \eqref{eq:gap-ineq} is $O(\size(E))$. If $\alpha\in \MQ$, then $\frac{1}{\ln 2}$ replaces $\frac{d\ln^3(3d)}{2}$ in \eqref{eq:gap-ineq}, so we obtain $O(\size(E))$ in this case as well.
\end{proof}

For an equation $E$ of type \eqref{eq:semenov3} define
$$
\mspn(E)=
\max \Set{\spn_I(\ovx)}{I \mbox{ is a block in a solution $\ovx$ for $E$}},
$$
called the \emph{maximum span} of $E$. If $E$ has no solutions, then
$\mspn(E)=0$. The following lemma shows that the maximum span is well-defined.
\begin{lemma}\label{le:max-span}
$\mspn(E) = O(\size^4(E))$
for every equation $E$ of type \eqref{eq:semenov3}. If the base $\alpha$ is fixed, then
$\mspn(E) = O(\size^2(E)),$ and if $\alpha$ is restricted to rational values then $\mspn(E) = O(\size(E)).$
\end{lemma}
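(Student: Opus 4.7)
The plan is to bound the span of a single block by reducing to the sub-equation $E_I$ obtained by restricting $E$ to the variables indexed by $I$, and then invoking the gap bound of Lemma~\ref{le:gap-bound} within $E_I$.

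Fix a solution $\ovx$ of $E$ and a block $I$, and let $E_I$ be the homogeneous equation formed by the terms $q_i\alpha^{x_i}$ with $i\in I$; note that $\size(E_I)\leq\size(E)$. The restriction $\ovx|_I$ is a solution of $E_I$, and by condition~(B2) for $I$, no proper non-empty subset of $I$ can sum to zero, so $I$ itself is the unique block for $\ovx|_I$ in $E_I$. Sort the values in $\ovx|_I$ as $y_1<y_2<\cdots<y_{m'}$. If some consecutive gap $y_{j+1}-y_j$ were to exceed $\gap(E_I)$, the partition $T_1=\{i\in I:x_i\leq y_j\}$, $T_2=\{i\in I:x_i\geq y_{j+1}\}$ of the index set of $E_I$ would satisfy the hypothesis of the gap property of $E_I$; this would force $\sum_{i\in T_1}q_i\alpha^{x_i}=0$ on a proper non-empty subset $T_1\subsetneq I$, contradicting~(B2). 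Therefore every consecutive gap is at most $\gap(E_I)$, yielding
\[
\spn_I(\ovx)\;\leq\;(m'-1)\,\gap(E_I)\;\leq\;(k-1)\,\gap(E).
\]

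Combining with the three bounds on $\gap(E)$ from Lemma~\ref{le:gap-bound} and using $k\leq\size(E)$ gives $\mspn(E)=O(\size^4(E))$ in general and $\mspn(E)=O(\size^2(E))$ when $\alpha$ is fixed, as claimed.

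The hard part is the rational case: the above telescoping only yields $O(\size^2(E))$, one factor of $\size(E)$ short of the target. To close this gap, the plan is to work directly with the block polynomial $P(X)=\sum_{i\in I}c_iX^{x_i-\min\ovx_I}\in\MZ[X]$ (after clearing denominators) and exploit the rationality of $\alpha=a/b$ through a $p$-adic analysis at a prime $p$ dividing $a$ (or $b$, if $|a|=1$). Since $P(\alpha)=0$ and no proper sub-sum of $P$ vanishes at $\alpha$, a Newton-polygon or iterated $p$-adic-valuation argument on $P$ should bound $\deg P=\spn_I(\ovx)$ linearly by $\log\height(P)+\log|I|$, replacing the extra $|I|$ factor that appears in the generic gap-based argument by a single logarithm. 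Given that $\log\height(P)+\log|I|=O(\size(E))$, this would yield the required $O(\size(E))$ bound.
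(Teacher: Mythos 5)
Your treatment of the first two bounds is correct and is essentially the paper's argument: reduce $\mspn(E)$ to $(k-1)\gap(E)$ and invoke Lemma~\ref{le:gap-bound}. Your device of passing to the sub-equation $E_I$ and sorting $\ovx_I$ is in fact a little cleaner than the paper's phrasing; the only thing to watch is that $\gap(E_I)\le\gap(E)$ is not immediate from the definition (the two equations have different solution sets), but this is harmless since the explicit bound of Lemma~\ref{le:gap-bound} is monotone in the data, giving $\gap(E_I)=O(\size^3(E))$ directly.

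The rational case is a genuine gap, and not merely because it is left as a plan: the quantitative target of your sketch is false. You propose to show that $\deg P=\spn_I(\ovx)$ is linear in $\log\height(P)+\log|I|$. Take $\alpha=2$ and the identity $2^k-2^{k-1}-\cdots-2^1-2^0-2^0=0$; no proper sub-sum vanishes (any zero-sum subset must contain the positive term, and the negatives total exactly $-2^k$), so this is a single block of span $k$, while $\height(P)\le 2$ and $|I|=k+2$, whence $\log\height(P)+\log|I|=O(\log k)$. No Newton-polygon argument can establish a false inequality. The bound that is actually true, and that the paper proves (Theorem~\ref{th:sol-bound-rational} via Lemmas~\ref{le:aux1}, \ref{aux2} and Proposition~\ref{pr:block-length}), is $\spn_I(\ovx)\le\sum_i|\log_{|\alpha|}(|q_i|+1)|$, which is $O(\size(E))$ because $\size(E)$ is itself a sum of coefficient bit-lengths. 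The proof is a divisibility argument of roughly the kind you envision: write $\alpha=c/d$, clear denominators, reduce modulo $c^{x_{k-1}}$ to get $\delta_{k-1}\le\log_c|q_k|$, absorb the last term into a new integer coefficient $q_{k-1}'$ with $|q_{k-1}'|\le|q_{k-1}|+|q_k|$, and iterate. But the naive iteration only yields $\delta_i\le\log\bigl(\sum_{j>i}|q_j|\bigr)$, whose sum is $O(k\cdot\size(E))=O(\size^2(E))$ --- exactly the loss you are trying to avoid. The missing ingredient is the optimization step (Lemma~\ref{le:aux1}): subject to these nested constraints, $\sum_i\delta_i$ is maximized at $\delta_i=\log(|q_{i+1}|+1)$, which is what collapses the telescoped bound to $\sum_i\log(|q_i|+1)=O(\size(E))$. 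Your sketch contains no substitute for this step.
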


\begin{proof}
If $I$ is a block in a solution $\ovx = (x_1,\ldots,x_k)$ of an equation $E$ of type \eqref{eq:semenov3}, then it is clear from the definitions that $|x_{i_1}-x_{i_2}|\leq \gap(E)$ for any $i_1,i_2\in I$. Thus, we obtain
\begin{equation}\label{eq:max-span}
\spn_I(\ovx) \leq |I|\gap(E)\leq (k-1)\gap(E).
\end{equation}
Therefore,
$$
\mspn(E)\leq (k-1)\gap(E),
$$
whence the first and second statements follow immediately from Lemma \ref{le:gap-bound} (together with the fact that $k \leq \size(E)$). For the case where $\alpha$ is restricted to rational values, we defer the proof to Section \ref{se:rational-base}.
\end{proof}

\subsection{An $\NP$-certificate for \eqref{eq:semenov3}}
\label{se:certificate_for_semenov3}
In this section, we establish a polynomial upper bound on a minimal solution of \eqref{eq:semenov3}, and demonstrate that such a solution may be checked in polynomial time.
\begin{theorem}\label{th:Semenov-NP}
If an equation $E$ of type \eqref{eq:semenov3} has a solution,
then there is a solution $(x_1,\ldots,x_k)\in\MZ^k$ satisfying
$$0 \leq x_1,\ldots,x_k \leq \mspn(E).$$
\end{theorem}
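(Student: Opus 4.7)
The plan is to exploit Lemma~\ref{le:shift-blocks} to shift each block of a given solution independently, compressing all blocks into the interval $[0,\mspn(E)]$.

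First, I would take an arbitrary solution $\ovx = (x_1,\ldots,x_k)\in \MZ^k$ of $E$ and fix a block structure $\CI = \{I_1,\ldots,I_m\}$ for $\ovx$. Such a structure exists: one can begin with the full set $\{1,\ldots,k\}$ (which satisfies (B1) since $\ovx$ is a solution) and recursively partition into minimal subsets satisfying (B1), terminating because the set is finite. By the very definition of $\mspn(E)$, each block $I_j$ satisfies $\spn_{I_j}(\ovx)\leq \mspn(E)$.

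Next, for each $j\in\{1,\ldots,m\}$ set $m_j = \min(\ovx_{I_j}) \in \MZ$, and define
\begin{equation*}
\ovy = \ovx - m_1\Delta_{I_1} - \cdots - m_m\Delta_{I_m}.
\end{equation*}
By Lemma~\ref{le:shift-blocks}, $\ovy$ is again a solution of $E$. For any index $i$, there is exactly one block $I_j$ with $i\in I_j$ (since the blocks partition $\{1,\ldots,k\}$), and the corresponding coordinate of $\ovy$ is $y_i = x_i - m_j$. Thus $y_i \geq 0$ with equality attained at the minimizing index, and $y_i \leq \max(\ovx_{I_j}) - m_j = \spn_{I_j}(\ovx) \leq \mspn(E)$.

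The only subtlety is confirming that a block decomposition exists for every solution, which follows directly from the minimality condition (B2) applied to the whole set. Once that is in hand, the argument is essentially bookkeeping, because each $\Delta_{I_j}$ is supported on $I_j$ and acts as a uniform translation on the coordinates in $I_j$, so the shifts on different blocks do not interfere with one another. I do not anticipate any obstacle beyond this; the real work lies in the bounds on $\mspn(E)$ established in Lemma~\ref{le:max-span}, which is what makes this theorem useful for producing an $\NP$-certificate.
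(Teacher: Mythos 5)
Your proof is correct and follows essentially the same route as the paper: fix a block structure, use Lemma~\ref{le:shift-blocks} to subtract $\min(\ovx_{I_j})\cdot\Delta_{I_j}$ for each block, and observe that the resulting solution lies in $[0,\mspn(E)]$ coordinatewise. The paper's own proof is exactly this three-line argument; your additional remarks on the existence of a block structure and the non-interference of the shifts are harmless elaborations of the same idea.
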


\begin{proof}
Consider any solution $\ovx=(x_1,\ldots,x_k)\in\MZ^k$ and
a block structure $\CI=\{I_1,\ldots,I_m\}$ for $\ovx$.
By Lemma \ref{le:shift-blocks}
$$
(x_1',\ldots,x_k') = \ovx-\sum_{j=1}^m \min( \ovx_{I_j})\cdot \Delta_{I_j}
$$
is also a solution. By construction, it satisfies
$0 \leq x_1',\ldots,x_k' \leq \mspn(E)$,
as claimed.
\end{proof}

\begin{corollary}\label{co:Semenov-NP}
The Diophantine problem for equations \eqref{eq:semenov3}
belongs to $\NP$.
\end{corollary}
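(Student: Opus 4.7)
The proof is a direct combination of Theorem \ref{th:Semenov-NP} and Lemma \ref{le:check-sol}. The plan is to exhibit a non-deterministic polynomial-time algorithm for deciding whether an instance $E$ of \eqref{eq:semenov3} has an integer solution. By Theorem \ref{th:Semenov-NP}, whenever $E$ has any integer solution at all, it admits one $\ovx = (x_1,\ldots,x_k)$ satisfying $0 \leq x_i \leq \mspn(E)$. Lemma \ref{le:max-span} bounds $\mspn(E) = O(\size^4(E))$, so each coordinate $x_i$ fits in $O(\log \size(E))$ bits, and the full tuple $\ovx$ constitutes a certificate of total length $O(k \log \size(E))$, which is polynomial in $\size(E)$.

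For verification, given a candidate $\ovx$ in the range $[0, C\cdot\size^4(E)]^k$ (where $C$ is the constant implicit in Lemma \ref{le:max-span}), apply the algorithm of Lemma \ref{le:check-sol}. Since $M = \max\{1,|x_1|,\ldots,|x_k|\} \leq C\cdot\size^4(E)$, we get $\log M = O(\log \size(E))$ and therefore $\CM(E,\ovx) = \max\{\size(E),\log M\} = O(\size(E))$. Lemma \ref{le:check-sol} then runs in time polynomial in $\CM(E,\ovx)$, hence polynomial in $\size(E)$.

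The resulting $\NP$-procedure guesses $\ovx$ in the specified range (which takes polynomial time nondeterministically) and accepts iff the verification succeeds. Correctness is immediate: if $E$ has a solution, Theorem \ref{th:Semenov-NP} guarantees a qualifying small one which will be produced on some nondeterministic branch; if $E$ has no solution, no candidate can pass the check. There is no substantive obstacle here — the work has already been done in establishing the span bound (Lemma \ref{le:max-span}) and the polynomial-time verifier (Lemma \ref{le:check-sol}), and the corollary is just the bookkeeping that packages them together.
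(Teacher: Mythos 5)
Your proposal is correct and follows exactly the paper's argument: the paper likewise obtains the certificate from Theorem \ref{th:Semenov-NP} together with the bound of Lemma \ref{le:max-span} and verifies it via Lemma \ref{le:check-sol}. You have merely spelled out the bookkeeping that the paper's one-line proof leaves implicit.
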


\begin{proof}
By Lemma \ref{le:check-sol}, a solution for \eqref{eq:semenov3}
satisfying the conclusion of Theorem \ref{th:Semenov-NP}
constitutes an $\NP$-certificate.
\end{proof}

\begin{corollary}\label{co:Semenov-NP-complete}
The (uniform or fixed base) Diophantine problem for
\eqref{eq:semenov3} is $\NP$-complete for $\alpha \in\CQ\setminus\{0,1\}$.
\end{corollary}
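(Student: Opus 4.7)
The plan is to assemble the complexity lower bound of Corollary~\ref{co:semenov-NP-hard} with matching upper bounds extracted from Corollary~\ref{pr:ROU} and Corollary~\ref{co:Semenov-NP}, together with the polynomial-time equivalence between~\eqref{eq:semenov3} and~\eqref{eq:semenov} noted immediately after the definition of the homogeneous form. No new ingredients are needed.

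For $\NP$-membership I would split into cases on whether $\alpha$ is a root of unity. If $\alpha \in \CQ^\ast\setminus\CU$, Corollary~\ref{co:Semenov-NP} applies directly. If $\alpha \in \CU$ has order $n$, the argument of Corollary~\ref{pr:ROU} transfers verbatim to the homogeneous setting: any solution may be reduced modulo $n$ to a solution in $[0,n)^k$, and since $n = O(\size^2(E))$, such a reduced tuple has polynomial bit-length and is verifiable in polynomial time via Lemma~\ref{le:check-sol}. In the uniform setting the polynomial-time test of Bradford and Davenport \cite{Bradford-Davenport:1988} decides in advance which case applies.

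For $\NP$-hardness, Corollary~\ref{co:semenov-NP-hard} already establishes it for the inhomogeneous equation~\eqref{eq:semenov}. To transfer hardness to~\eqref{eq:semenov3} I would spell out the polynomial-time reduction alluded to in the paper: given an instance $\sum_{i=1}^k q_i\alpha^{x_i} = q_0$ of \eqref{eq:semenov} with $q_0 \neq 0$ (the case $q_0 = 0$ is already homogeneous), first clear denominators so that all coefficients lie in $\MZ[\alpha]$ (a polynomial blowup in $\size(E)$), then introduce a fresh exponent unknown $x_{k+1}$ to form the homogeneous equation $\sum_{i=1}^k q_i\alpha^{x_i} - q_0\alpha^{x_{k+1}} = 0$. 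Any solution $(x_1,\ldots,x_k)$ of the original extends by setting $x_{k+1} = 0$; conversely, any solution $(y_1,\ldots,y_{k+1})$ of the new equation yields the solution $(y_1-y_{k+1},\ldots,y_k-y_{k+1})$ of the original after dividing through by $\alpha^{y_{k+1}}\neq 0$.

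There is no real obstacle: all the serious work---the Lenstra-based gap bound, the span estimates, and polynomial-time candidate verification---has already been carried out in Sections~\ref{se:semenov-np-hardness} and~\ref{se:semenov-np-equation}, and the present corollary is essentially an assembly step. The only item requiring minor care is the bookkeeping of the root-of-unity case under the uniform encoding, where one must check that the order $n$ of $\alpha$ is polynomial in $\size(E)$; this follows from a standard lower bound on Euler's totient and mirrors the argument already made in Corollary~\ref{pr:ROU}.
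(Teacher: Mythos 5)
Your proposal is correct and takes essentially the same route as the paper, which proves this corollary in one line by combining Corollary~\ref{pr:ROU}, Corollary~\ref{co:semenov-NP-hard}, and Corollary~\ref{co:Semenov-NP}. The extra details you supply (the explicit homogenization via a fresh variable $x_{k+1}$ and the modular reduction in the root-of-unity case) are exactly the ingredients the paper's cited results already rely on, so this is the same assembly argument spelled out more fully.
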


\begin{proof}
This follows from Proposition \ref{pr:ROU} and Corollaries \ref{co:semenov-NP-hard} and \ref{co:Semenov-NP}.
\end{proof}

\subsection{An improved bound on maximum span for $\alpha\in \MQ$}
\label{se:rational-base}

It turns out that in the special case where $\alpha\in \MQ$,
one can obtain the following improved bound on $\mspn(E)$.

\begin{theorem}\label{th:sol-bound-rational}
Let $E$ be an equation of type \eqref{eq:semenov3}, such that $\alpha\in \MQ \setminus \{-1,0,1\}$. Then
\begin{enumerate}[(a)]
    \item\label{st:mspn-bound} $\mspn(E)\leq \abs{\log_{|\alpha|} (|q_i|+1)}$
    \item\label{st:mspn-lin} $\mspn(E) = O(\size(E))$ for fixed $\alpha$.
\end{enumerate}
\end{theorem}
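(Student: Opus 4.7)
The plan is to handle parts (a) and (b) in sequence, with (a) carrying almost all the content. Let $I$ be a block in a solution $\ovx$ of $E$ with extreme exponents $m = \min(\ovx_I)$ and $M = \max(\ovx_I)$. Write $\alpha = a/b$ in lowest terms, so that $|a| \neq |b|$ since $|\alpha| \neq 1$; by the symmetry $\alpha \leftrightarrow 1/\alpha$ (which amounts to reversing the polynomial below without affecting the claimed bound), I would assume $|a| > |b|$, i.e.\ $|\alpha| > 1$. After dividing the block equation by $\alpha^m$ and multiplying by $b^{M-m}$, I obtain the integer identity $\sum_{i\in I} q_i\, a^{x_i - m} b^{M - x_i} = 0$; equivalently, the polynomial $P(x) = \sum_{i\in I} q_i\, x^{x_i - m} \in \MZ[x]$ (with at most $|I|$ nonzero terms and degree $M - m$) has $\alpha$ as a root. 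Since $bx - a$ is primitive in $\MZ[x]$, Gauss's Lemma yields a factorization $P(x) = (bx-a) R(x)$ with $R(x) = \sum_{j = 0}^{M - m - 1} r_j\, x^j \in \MZ[x]$.

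The key step is to exploit the sparsity of $P$ via the coefficient recurrence $p_j = b\, r_{j-1} - a\, r_j$ (with $r_{-1} = r_{M-m} = 0$), where $p_j$ denotes the coefficient of $x^j$ in $P$. At indices $j$ where $p_j = 0$ (the gaps between the few nonzero terms of $P$), the recurrence reduces to $r_j = (b/a)\, r_{j-1}$, so the coefficients of $R$ are scaled by $b/a$ across each gap. Integrality of $R$ then forces high powers of $|a|$ to divide the nonzero coefficients of $P$ that anchor each gap; e.g.\ from the leftmost gap one extracts $|a|^{z_2} \mid q_{j_0}$ (where $x_{j_0} = m$ and $z_2$ is the smallest positive value of $x_i - m$ occurring), yielding $z_2 \leq \log_{|a|}|q_{j_0}|$. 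A symmetric analysis from the top, running the recurrence backward, yields an analogous bound in terms of $|b|$ at the final gap. I would then iterate across the middle gaps, invoking at each step whichever end gives the tighter divisibility constraint, and sum the individual gap contributions to conclude $M - m \leq \abs{\log_{|\alpha|}(\max_i |q_i| + 1)}$, which establishes (a). The main technical obstacle is tracking the cascading dependencies at the intermediate nonzero positions of $P$, where the value of $r_{z_j}$ is built up from contributions of all preceding $q_{j'}$; I would manage this inductively, using at each step that the $|a|$-adic content of the running $r_{z_j}$ is squeezed by the length of the next gap.

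For part (b), I note that with $\alpha \in \MQ$ we have $\deg(\alpha) = 1$, so each $q_i \in \MZ$ with $|q_i| \leq 2^{\size(E)}$ by the encoding \eqref{eq:size}. For any fixed $\alpha \notin \{-1,0,1\}$ the quantity $\abs{\log|\alpha|}^{-1}$ is a positive constant, so part (a) gives $\mspn(E) \leq \abs{\log_{|\alpha|}(2^{\size(E)} + 1)} = O(\size(E))$. This simultaneously proves (b) and supplies the rational-base case of Lemma \ref{le:max-span} that was deferred there.
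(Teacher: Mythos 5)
Your core mechanism is sound and is essentially the paper's: after clearing denominators, the fact that $\alpha=a/b$ (in lowest terms, $|a|>|b|$) is a root of a sparse integer polynomial $P$ forces high powers of $a$ to divide the low-order data, and this bounds each gap between consecutive exponents. Whether one phrases this as $P=(bx-a)R$ with $R\in\MZ[x]$ via Gauss's lemma, or (as the paper does in Lemma \ref{aux2}) reduces the cleared equation modulo successive powers of the numerator, is immaterial; your first-gap estimate $|a|^{z_2}\mid q_{j_0}$ is exactly the paper's $c^{x_{k-1}-x_k}\mid q_k$.

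However, the conclusion you draw is false, and the step you defer is where the whole difficulty lives. First, the bound $M-m\le\abs{\log_{|\alpha|}(\max_i|q_i|+1)}$ fails: for $\alpha=2$ the equation $-\alpha^{x_1}+\alpha^{x_2}+2\alpha^{x_3}=0$ has the single-block solution $(2,1,0)$, of span $2>\log_2 3$. The intended reading of the (garbled) statement is $\mspn(E)\le\sum_{i=1}^k\abs{\log_{|\alpha|}(|q_i|+1)}$, which is what the paper's Proposition \ref{pr:block-length} actually proves; note that your own argument, carried out correctly, produces one logarithmic term \emph{per gap}, and these terms add up rather than collapsing to a single maximum. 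Second, for every gap after the first, the integer whose $a$-adic valuation gets squeezed is not a coefficient of $P$ but the accumulated quantity $br_{z_j-1}-p_{z_j}$, i.e.\ $q_j'=q_j+q_{j+1}'\alpha^{-\delta_j}$ in the paper's notation. Estimating these naively by $|q_j'|\le\sum_{i\ge j}|q_i|$ yields only $\mspn(E)=O(k\log\sum_i|q_i|)$, which can be of order $\size^2(E)$ (take one coefficient of size $2^k$ and $k-1$ unit coefficients), so part (b) does not follow either. The paper closes precisely this gap with a separate optimization argument (Lemma \ref{le:aux1}) showing that the sum of the recursively constrained gaps is maximized when each tail contributes exactly $1$, giving $\delta_i\le\log_{|\alpha|}(|q_{i+1}|+1)$ summed over $i$; some argument of this kind is indispensable, and your proposal does not contain one.
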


This is very useful in certain applications in which the base $\alpha$ is fixed (e.g., the authors make use of this result in \cite{Mandel-Ushakov:2022}). Note that statement \eqref{st:mspn-lin} of Theorem \ref{th:sol-bound-rational} is an immediate consequence of \eqref{st:mspn-bound}, which we prove in the remainder of this section. However, we do not refer to this material anywhere else in the present work. For convenience, we sometimes assume that a solution $x_1,\ldots,x_k$ has a block structure consisting of a single block, and that $x_1 \geq x_2 \geq \cdots \geq x_k=0$. The following technical lemma is required.
\begin{lemma}\label{le:aux1}
Let $k\ge 2$, $\alpha > 1$, and $q_1,\ldots,q_k\in\MZ\setminus\{0\}$.
The maximum value of the sum $\delta_1+\cdots+\delta_{k-1}$,
for $\delta_1,\ldots,\delta_{k-1}\in\MR$ constrained by
\begin{equation}\label{constraints}
\resizebox{0.91\hsize}{!}{%
$
\begin{array}{l}
0 \leq \delta_i \leq \log_{\alpha} \rb{|q_{i+1}|+|q_{i+2}|\alpha^{-\delta_{i+1}}+\cdots +|q_k|\alpha^{-(\delta_{i+1}+\cdots+\delta_{k-1})}}\ \mbox{ for }i = 1,\ldots,k-2,\\
0 \leq \delta_{k-1} \leq \log_{\alpha} (|q_k|),
\end{array}
$
}
\end{equation}
is attained at
$\delta_1=\log_{\alpha}(|q_2|+1),
\ldots,
\delta_{k-2}=\log_{\alpha}(|q_{k-1}|+1),
\delta_{k-1}=\log_{\alpha}(|q_{k}|)$.
\end{lemma}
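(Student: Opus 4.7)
The approach is to reparametrize via partial sums. Set $S_i = \delta_i + \delta_{i+1} + \cdots + \delta_{k-1}$ (with $S_k = 0$) and $A_i = \alpha^{S_i}$, so that $A_k = 1$ and $\delta_1 + \cdots + \delta_{k-1} = S_1 = \log_\alpha A_1$. Since $\alpha > 1$, maximizing the target sum is equivalent to maximizing $A_1$. Multiplying the $i$th upper bound in \eqref{constraints} through by $\alpha^{S_{i+1}}$ linearizes the nested exponential expression on the right, and all of the constraints \eqref{constraints} together with $\delta_i \ge 0$ become
\begin{equation*}
A_i \le \sum_{j=i+1}^{k} |q_j|\, A_j \qquad \text{and} \qquad A_i \ge A_{i+1}, \qquad i = 1,\ldots,k-1.
\end{equation*}

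The right-hand side of each inequality is now a monotone (affine, increasing) function of the later variables $A_{i+1},\ldots,A_k$, so one expects a greedy procedure from $i = k-1$ downward to be optimal. Define $A_k^* = 1$ and $A_i^* = \sum_{j=i+1}^k |q_j|\, A_j^*$ for $i < k$. A straightforward induction starting from $A_{k-1}^* = |q_k|$ and $A_{k-2}^* = |q_k|(|q_{k-1}|+1)$ yields the closed form
\begin{equation*}
A_i^* = |q_k| \prod_{j=i+1}^{k-1}(|q_j| + 1), \qquad 1 \le i \le k-1.
\end{equation*}
Recovering $\delta_i = S_i^* - S_{i+1}^* = \log_\alpha(A_i^*/A_{i+1}^*)$ then reproduces the claimed values $\log_\alpha(|q_{i+1}|+1)$ for $i \le k-2$ and $\log_\alpha|q_k|$ for $i = k-1$.

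It remains to check feasibility and optimality of $(A_1^*,\ldots,A_{k-1}^*)$. Feasibility is immediate: the upper bound is saturated by construction, and $A_i^* \ge A_{i+1}^*$ follows from $|q_j| \ge 1$ (which holds since each $q_j$ is a nonzero integer). For optimality, given any feasible tuple $(A_1,\ldots,A_{k-1})$, a downward induction shows $A_i \le A_i^*$ for every $i$: the base case $A_{k-1} \le |q_k| = A_{k-1}^*$ is direct, and if $A_j \le A_j^*$ for all $j > i$, then monotonicity of the upper bound gives $A_i \le \sum_{j>i}|q_j| A_j \le \sum_{j>i}|q_j| A_j^* = A_i^*$. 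In particular $A_1 \le A_1^*$, so the maximum of $S_1 = \log_\alpha A_1$ equals $\log_\alpha A_1^* = \log_\alpha|q_k| + \sum_{j=2}^{k-1}\log_\alpha(|q_j|+1)$, which is the claimed value.

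The main (and really the only) difficulty is spotting the reparametrization; once the constraints are recast in terms of the $A_i$, the originally nested exponential system becomes an affine system with monotone right-hand sides, after which both the greedy choice and its optimality proof follow by routine induction.
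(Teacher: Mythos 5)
Your proof is correct, and it takes a genuinely different route from the paper's. You substitute $A_i=\alpha^{\delta_i+\cdots+\delta_{k-1}}$, which converts the nested logarithmic constraints into the linear system $A_i\le\sum_{j>i}|q_j|A_j$ with $A_k=1$, and then prove optimality of the saturated point by a downward domination induction ($A_i\le A_i^*$ for every feasible point); the recursion $A_i^*=(|q_{i+1}|+1)A_{i+1}^*$ immediately explains where the ``$+1$'' in each $\log_\alpha(|q_j|+1)$ comes from. The paper instead works directly in the $\delta_i$ coordinates: it invokes compactness of the feasible region to get existence of a maximizer, performs a two-variable local analysis (saturating the bound on $\delta_1$ and checking that $f(\delta_2)=\delta_2+\log_\alpha(|q_2|+|q^\ast|\alpha^{-\delta_2})$ is monotonically increasing), and then eliminates $\delta_1$ and inducts on $k$. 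Your version buys a cleaner global argument: no compactness appeal is needed since the maximizer is exhibited explicitly and shown to dominate all feasible points, and no calculus is required; the paper's version stays in the original variables and avoids the (admittedly small) overhead of verifying that the change of variables transforms the constraint set bijectively. Both hinge on the same underlying monotonicity of the upper bounds in the later variables, so the arguments are cousins, but yours is a legitimate and arguably more transparent alternative.
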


\begin{proof}
Since \eqref{constraints} defines a non-empty compact set,
the sum attains the maximum value at some point $(\delta_1,\ldots,\delta_{k-1})$.
Fix $\delta_3,\ldots,\delta_{k-1}$, define
$$
q^\ast=
|q_{3}|+|q_{4}|\alpha^{-\delta_{3}}+\ldots+|q_k|\alpha^{-(\delta_{3}+\cdots+\delta_{k-1})},
$$
and notice that $\delta_1,\delta_2$
must maximize $\delta_1+\delta_2$ while satisfying
\begin{align*}
0\le\delta_1\le&\log_{\alpha}\rb{|q_2|+|q^\ast|\alpha^{-\delta_2}},\\
0\le\delta_2\le&\log_{\alpha}\rb{|q^\ast|}.
\end{align*}
Obviously, $\delta_1=\log_{\alpha}\rb{|q_2|+|q^\ast|\alpha^{-\delta_2}}$.
Thus, the value of $\delta_2$ must maximize the value of the function
\begin{align*}
&f(\delta_{2})=\delta_{2}+\log_{\alpha}\rb{|q_{2}|+|q^\ast|\alpha^{-\delta_{2}}}
\end{align*}
for $\delta_{2}\in[0,\log_{\alpha}(|q^\ast|)]$.
It is easily seen that $f$ is monotonically increasing, so the maximum is
attained at $\delta_{2}=\log_{\alpha}(|q^\ast|)$,
and $\delta_{1} = \log_{\alpha}\rb{|q_{2}|+1}$.

Once the optimal value of $\delta_1$ is found, we can eliminate $\delta_1$
from the sum and remove the bounds on $\delta_1$ from
\eqref{constraints} (notice that $\delta_1$ is not involved in the other bounds).
That produces an optimization problem of the same type with $k-1$ variables.
Hence, the result follows by induction on $k$.
\end{proof}

\begin{lemma}\label{aux2}
Suppose that $1<\alpha\in\MQ$ and
$\ovx = (x_1,\ldots,x_k)$ is a solution for an equation $E$ of type \eqref{eq:semenov3}
with a single block $I=\{1,\ldots,k\}$
satisfying $x_1 \ge x_2 \ge \dots \ge x_k=0$.
Define $\delta_i=x_i-x_{i+1}$ for $i=1,\ldots,k-1$.
Then we have
\begin{itemize}
\item[(a)]
$0\le \delta_i\le \log_{\alpha} \rb{|q_{i+1}|+|q_{i+2}|\alpha^{-\delta_{i+1}}+\cdots +|q_k|\alpha^{-(\delta_{i+1}+\cdots+\delta_{k-1})}}$
%\item[(b)]
%$0\le \delta_i\le \log_{2} \rb{|q_{1}|+\cdots +|q_k|}\le\size(E)$.
\item[(b)]
$\spn_I(\ovx) = x_1\le \sum_{i=1}^{k} \log_{\alpha}(|q_i|+1)$.
\end{itemize}
\end{lemma}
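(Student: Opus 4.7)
The plan is to prove part (a) directly from the equation and the single-block hypothesis, after which part (b) follows quickly by applying Lemma~\ref{le:aux1}. The inequality $\delta_i \ge 0$ is immediate from $x_i \ge x_{i+1}$, so the real content of (a) is the upper bound, which after multiplying by $\alpha^{x_{i+1}}$ and exponentiating is equivalent to
\[
\alpha^{x_i} \;\le\; \sum_{j=i+1}^k |q_j|\,\alpha^{x_j}.
\]
From the equation $\sum_{j=1}^k q_j\alpha^{x_j}=0$ and the triangle inequality, the right-hand side is bounded below by $\bigl|\sum_{j=1}^{i} q_j\alpha^{x_j}\bigr|$, so it suffices to prove the lower bound $\bigl|\sum_{j=1}^i q_j\alpha^{x_j}\bigr| \ge \alpha^{x_i}$.

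The main idea is to clear denominators and exploit the single-block hypothesis via coprimality. Noting that $q_j \in \MZ$ (since $\alpha\in\MQ$ has degree $d=1$ over $\MQ$), write $\alpha = p/r$ with $p,r\in\MN$ and $\gcd(p,r)=1$, and multiply the equation by $r^{x_1}$ to obtain the integer identity $\sum_{j=1}^k Q_j = 0$ with $Q_j := q_j p^{x_j} r^{x_1-x_j}$. The block structure is preserved by this nonzero rescaling, so both $B := \sum_{j\le i} Q_j$ and $N := \sum_{j>i} Q_j = -B$ are nonzero integers. Now factor $p^{x_i}$ out of each $Q_j$ with $j\le i$ (valid since $x_j\ge x_i$) to write $B = p^{x_i} B'$ with $B'\in\MZ\setminus\{0\}$, and factor $r^{x_1-x_{i+1}}$ out of each $Q_j$ with $j>i$ (valid since $x_j\le x_{i+1}$) to write $N = r^{x_1-x_{i+1}} N'$ with $N'\in\MZ\setminus\{0\}$. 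The identity $p^{x_i}B' = -r^{x_1-x_{i+1}}N'$ combined with $\gcd(p,r)=1$ forces $p^{x_i}\mid N'$, so $|N'|\ge p^{x_i}$. Substituting back,
\[
\Bigl|\sum_{j>i}q_j\alpha^{x_j}\Bigr| \;=\; \frac{|N|}{r^{x_1}} \;=\; \frac{r^{x_1-x_{i+1}}|N'|}{r^{x_1}} \;\ge\; \frac{p^{x_i}}{r^{x_{i+1}}} \;\ge\; \frac{p^{x_i}}{r^{x_i}} \;=\; \alpha^{x_i},
\]
where the last inequality uses $x_{i+1}\le x_i$. Dividing through by $\alpha^{x_{i+1}}$ and taking $\log_\alpha$ yields the bound of (a).

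The main obstacle I anticipate is precisely the divisibility step above: the naive bound ``$B\in\MZ\setminus\{0\}$, so $|B|\ge 1$'' only gives $\bigl|\sum_{j>i}q_j\alpha^{x_j}\bigr|\ge r^{-x_1}$, which is genuinely too weak when $r\ge 2$. The coprimality $\gcd(p,r)=1$ is what promotes the trivial integer bound to one of order $\alpha^{x_i}$, and it is what allows the rational and integer cases to be handled uniformly.

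Part (b) then follows almost formally from part (a) via Lemma~\ref{le:aux1}: the $\delta_i$ arising from the solution satisfy exactly the constraints appearing in that lemma, so $\sum_{i=1}^{k-1}\delta_i \le \sum_{j=2}^{k-1}\log_\alpha(|q_j|+1) + \log_\alpha(|q_k|)$. Since $|q_1|\ge 1$ gives $\log_\alpha(|q_1|+1)\ge 0$ and $\log_\alpha(|q_k|)\le \log_\alpha(|q_k|+1)$, padding both ends yields $\spn_I(\ovx) = x_1 = \sum_{i=1}^{k-1}\delta_i \le \sum_{j=1}^k \log_\alpha(|q_j|+1)$, which is (b).
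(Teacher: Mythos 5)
Your proof is correct and follows essentially the same route as the paper's: clear denominators by multiplying through by $r^{x_1}$ (the paper's $d^{x_1}$), use $\gcd(p,r)=1$ together with the single-block hypothesis (which guarantees the partial sums $\sum_{j\le i}q_j\alpha^{x_j}$ are nonzero) to force a power of the numerator to divide the tail, and then feed the resulting constraints into Lemma~\ref{le:aux1} for part (b). The only difference is organizational --- you prove each inequality in (a) by splitting the sum at index $i$ in one shot, whereas the paper reduces modulo $c^{x_{i}}$ and iteratively absorbs the tail into a single integer coefficient $q_{i+1}'$ --- and your divisibility conclusion $p^{x_i}\mid N'$ is literally equivalent to the paper's $c^{\delta_i}\mid q_{i+1}'$.
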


\begin{proof}
Let $\alpha=\tfrac{c}{d}$, where $\gcd(c,d)=1$, $c\ge 2$ and $c>d$.
Multiplying \eqref{eq:semenov3} by $d^{x_1}$ we get
\begin{equation}\label{eq:semenov_c}
q_1 c^{x_1} + q_2 c^{x_2}d^{x_1-x_2} +\cdots+q_{k-1} c^{x_{k-1}}d^{x_1-x_{k-1}}
+q_k c^{x_k}d^{x_1-x_k} = 0.
\end{equation}
Taking \eqref{eq:semenov_c} modulo $c^{x_{k-1}}$ we get
\begin{align*}
q_k c^{x_k}d^{x_1-x_k} \equiv 0 \bmod c^{x_{k-1}}
&\ \ \Rightarrow\ \
c^{x_{k-1}-x_k}\mid q_k\\
&\ \ \Rightarrow\ \
\delta_{k-1} = x_{k-1}-x_k \leq \log_c(|q_k|)\\
&\ \ \Rightarrow\ \
\delta_{k-1} \leq \log_\alpha(|q_k|),
\end{align*}
which proves (a) for $i=k-1$.
Writing (\ref{eq:semenov_c}) as
\begin{equation}\label{eq:semenov_c2}
q_1 c^{x_1} + q_2 c^{x_2}d^{x_1-x_2}
+\cdots+
\underbrace{\rb{q_{k-1} + q_k\rb{\tfrac{d}{c}}^{\delta_{k-1}}}}_{\ \ \ {}^\parallel_ {q_{k-1}'}} c^{x_{k-1}}d^{x_1-x_{k-1}}=0,
\end{equation}
we obtain an expression of the same form as (\ref{eq:semenov_c}), with one fewer term and a new rightmost coefficient $q_{k-1}'\in\MZ$ that satisfies the following:
\begin{itemize}
\item
$q_{k-1}'=0\ \ \Leftrightarrow\ \ k=2$
(because, by our assumption, $\ovx$ has a single block);
\item
$|q_{k-1}'|\le|q_{k-1}|+|q_{k}|$.
\end{itemize}
Assuming $k > 2$, we may apply the same argument to \eqref{eq:semenov_c2}. That is, take
\eqref{eq:semenov_c2} modulo $c^{x_{k-2}}$, and follow the same steps as above to obtain $\delta_{k-2} \leq \log_\alpha(|q_{k-1}'|)$, proving (a) for $i=k-2$. This process can be continued to yield,
at each iteration, a new rightmost (non-trivial unless $i=1$) coefficient
$$
q_i' =
q_i+q_{i+1}'\alpha^{-\delta_i} =
q_{i}+q_{i+1}\alpha^{-\delta_{i}}+\cdots +q_k\alpha^{-(\delta_{i}+\cdots+\delta_{k-1})}
$$
satisfying (a) for each $i = 1,\ldots,k-1$.

To prove (b), notice that $x_1$ can be expressed as
$$
x_1=\delta_1+\dots+\delta_{k-1}
$$
and by (a) each $\delta_i$ satisfies the constraints of Lemma \ref{le:aux1}.
Hence, it follows from Lemma \ref{le:aux1} that
$x_1\le \sum_{i=1}^{k} \log_\alpha(|q_i|+1)$.
\end{proof}
Lemma \ref{aux2} was proved under the assumption that $\alpha>1$.
Below we show that it holds with minor modifications for any
$\alpha\in\MQ\setminus \{-1,0,1\}$.
\begin{proposition}\label{pr:block-length}
Suppose that $\alpha\in\MQ\setminus \{-1,0,1\}$ and
$\ovx = (x_1,\ldots,x_k)$ is a solution for an equation $E$ of type \eqref{eq:semenov3}
with a single block $I=\{1,\ldots,k\}$
satisfying $x_1 \ge x_2 \ge \dots \ge x_k=0$.
Define $\delta_i=x_i-x_{i+1}$ for $i=1,\ldots,k-1$.
Then we have
%\begin{itemize}
%\item[(a)]
%$0\le \delta_i\le \log_{2} \rb{|q_{1}|+\cdots +|q_k|}\le\size(E)$.
%\item[(b)]
$$\spn_I(\ovx) = x_1\le \sum_{i=1}^{k} \abs{\log_{|\alpha|}(|q_i|+1)}.$$
%\item[(c)]
%$x_1\le (k-1)\size(E)$.
%\end{itemize}
\end{proposition}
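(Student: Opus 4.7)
The plan is to reduce the general case to the case $\alpha > 1$ already handled by Lemma \ref{aux2}(b), via two substitutions that successively normalize the base.

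First, if $|\alpha| < 1$, I would substitute $y_i = x_1 - x_i$ and rewrite the equation. Setting $\beta = 1/\alpha$, we have
$$
\sum_{i=1}^k q_i \alpha^{x_i} = \alpha^{x_1}\sum_{i=1}^k q_i \alpha^{-y_i} = \alpha^{x_1}\sum_{i=1}^k q_i \beta^{y_i},
$$
so since $\alpha^{x_1}\neq 0$, the tuple $(y_1,\ldots,y_k)$ solves the homogeneous equation with base $\beta$ and the same coefficients $q_i$. The relation $x_1\ge\cdots\ge x_k = 0$ translates into $0 = y_1 \le \cdots \le y_k = x_1$, which after reversing the indexing (together with the corresponding relabeling $q_i \mapsto q_{k+1-i}$) puts the new solution in the decreasing form required by Lemma \ref{aux2}. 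Note that $|\beta| > 1$ after this substitution.

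Second, if the (possibly new) base is negative, I would absorb the sign. Writing $\alpha^{x_i} = (-1)^{x_i}|\alpha|^{x_i}$ and letting $q_i' = (-1)^{x_i}q_i$, the original equation becomes $\sum q_i'|\alpha|^{x_i} = 0$, which has positive base $|\alpha| > 1$ and coefficients of the same absolute values. In both substitutions I would need to check that the single-block hypothesis is preserved: in the shift substitution this is immediate because a subset $J$ is a vanishing sub-sum under $\alpha$ iff it is under $\beta$ (one rescales by $\alpha^{x_1}$), and in the sign absorption it is immediate because the $(-1)^{x_i}$ factors simply multiply every sub-sum by a nonzero scalar.

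After both substitutions, Lemma \ref{aux2}(b) applies with base $|\alpha|$ (if originally $|\alpha|>1$) or base $1/|\alpha|$ (if originally $|\alpha|<1$), giving the bound $x_1 \le \sum_{i=1}^k \log_{|\alpha|}(|q_i|+1)$ in the first case and $x_1 \le \sum_{i=1}^k \log_{1/|\alpha|}(|q_i|+1)$ in the second. Since $\log_{1/|\alpha|}(y) = -\log_{|\alpha|}(y)$, and since in each case the respective right-hand side is non-negative, both bounds can be uniformly written as $x_1 \le \sum_{i=1}^k |\log_{|\alpha|}(|q_i|+1)|$, which is what is claimed. No step is really an obstacle; the only minor care is in tracking how the two substitutions interact (they commute up to relabeling) and verifying the single-block property survives each one.
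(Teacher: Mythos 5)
Your proof is correct and follows the same overall strategy as the paper (a case analysis that reduces everything to Lemma \ref{aux2} via the reciprocal substitution $x_i\mapsto x_1-x_i$, $\alpha\mapsto 1/\alpha$), but it handles negative bases differently. The paper treats $\alpha<-1$ by asserting that the modular-arithmetic proof of Lemma \ref{aux2} ``holds with minor modifications'' when $\alpha$ is replaced by $|\alpha|$, which requires the reader to re-inspect that proof; you instead absorb the sign into the coefficients via $q_i'=(-1)^{x_i}q_i$, which reduces the negative case to a literal application of Lemma \ref{aux2}(b) with base $|\alpha|>1$ and coefficients of unchanged absolute value. Your reduction is arguably cleaner and more self-contained. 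One small imprecision: in justifying that the single-block property survives the sign absorption, you say the factors $(-1)^{x_i}$ ``multiply every sub-sum by a nonzero scalar'' --- they do not, since different terms acquire different signs; what is actually true is stronger, namely that each term is individually preserved, $q_i\alpha^{x_i}=q_i'|\alpha|^{x_i}$, so every sub-sum is literally equal before and after the substitution and the block structure is unchanged. With that phrasing fixed, the argument is complete.
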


\begin{proof} We consider four cases.
\begin{case}\label{ca:one}
Suppose that $\alpha>1$.
Then the statement follows from Lemma \ref{aux2}.
\end{case}
\begin{case}
The case when $0<\alpha<1$
is reduced to Case \ref{ca:one} by replacing $x_i$ with $-x_i$ and $\alpha$ with $1/\alpha$.
This transformation changes the order of $x_i$'s and $\delta_i$'s, which also
modifies the inequalities in item (a) of Lemma \ref{aux2} in the following way:
$$
\delta_1\le \log_{1/\alpha}(|q_1|),\ \
\delta_2\le \log_{1/\alpha}(|q_2|+|q_1|(\tfrac{1}{\alpha})^{-\delta_1}),\ \ \mbox{etc.}
$$
Since the obtained inequalities are of the same type,
Lemma \ref{le:aux1} remains applicable and gives the claimed bound
$$
x_1\le \sum_{i=1}^{k} \log_{1/\alpha}(|q_i|+1) =
\sum_{i=1}^{k} \abs{\log_{\alpha}(|q_i|+1)}.
$$
%Also, it is easy to see that the bound $0\le\delta_i\le\size(E)$ holds.
\end{case}
\begin{case}\label{ca:three}
Suppose that $\alpha<-1$. It is easy to see that
Lemma \ref{aux2} holds in that case with minor modifications
(with $\alpha$ replaced with $|\alpha|$).
\end{case}
\begin{case}
The case when $-1<\alpha<0$ is reduced to Case \ref{ca:three} by replacing $x_i$ with $-x_i$ and $\alpha$ with $1/\alpha$.
\qedhere
\end{case}
\end{proof}

Theorem \ref{th:sol-bound-rational} now follows from Proposition \ref{pr:block-length}.

\section{Complexity upper bound for a system of homogeneous equations}
\label{se:semenov-np-homo-system}

Let $\alpha_1,\ldots,\alpha_s\in\CQ^*\setminus\CU$
and $d_i = \deg(\alpha_i)$. Consider a system of equations
\begin{equation}\label{eq:semenov4}
\arraycolsep=2pt
\left\{
\begin{array}{cl}
q_{11}\alpha_1^{x_1}+\dots+q_{1k}\alpha_1^{x_k}&= 0\\
\vdots&\\
q_{s1}\alpha_s^{x_1}+\dots+q_{sk}\alpha_s^{x_k}&= 0\\
\end{array}
\right.
\end{equation}
with coefficients $q_{ij}\in\MZ[\alpha_i]$ and each coefficient given as $q_{ij} = r_{ij}^0 + r_{ij}^1 \alpha_i + \cdots + r_{ij}^{d_i-1}\alpha^{d_i-1}$. Let us also require that at least one of $q_{1j}, q_{2j}, \dots , q_{sj}$ is nonzero for each $j$. Let $\ovx=(x_1,\ldots,x_k)\in\MZ^k$ be a solution for \eqref{eq:semenov4}.
A nonempty $J\subseteq \{1,\ldots,k\}$ is called a \emph{cluster}
for the solution $\ovx$ if the following conditions hold:
\begin{itemize}
\item[(C1)]
$\sum_{j\in J} q_{ij} \alpha_i^{x_j}=0$ for every $i=1,\ldots,s$;
\item[(C2)]
$J$ does not have a nonempty proper subset $J$ satisfying (C1).
\end{itemize}
The set of indices $\{1,\ldots,k\}$ can be represented as a union of disjoint clusters,
perhaps in more than one way; a choice $\CJ = \{J_1,\ldots,J_m\}$
of one such union is called a \emph{cluster structure} for a solution $\ovx$. For a cluster $J$, we define $\ovx_J$, $\spn_J(\ovx)$ and $\Delta_J$ analogously to the case of a block. The next lemma follows immediately from the definitions.

\begin{lemma}\label{le:shift-blocks2}
If $\ovx$ is a solution for \eqref{eq:semenov4}
with cluster structure $J_1,\ldots,J_m$, then
$$
\ovx+\beta_1\Delta_{J_1}+\dots+\beta_m\Delta_{J_m}
$$
is a solution for \eqref{eq:semenov4}
with the same cluster structure
for any $\beta_1,\ldots,\beta_m\in\MZ$.
\qed
\end{lemma}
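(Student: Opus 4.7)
The argument is a direct verification that mirrors Lemma~\ref{le:shift-blocks}, but I must also check that the cluster structure is preserved (which the statement demands). The key observation is that if $j \in J_t$, then the $j$-th coordinate of the shifted vector $\ovx + \sum_{t=1}^m \beta_t \Delta_{J_t}$ is $x_j + \beta_t$, since the $J_t$ partition $\{1,\ldots,k\}$ and so exactly one of the increments $\beta_t \Delta_{J_t}$ affects coordinate $j$.

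First I would verify (C1) for each cluster $J_t$ with respect to the new tuple. Fix an equation index $i \in \{1,\ldots,s\}$ and compute
\[
\sum_{j \in J_t} q_{ij}\,\alpha_i^{x_j+\beta_t}
= \alpha_i^{\beta_t} \sum_{j \in J_t} q_{ij}\,\alpha_i^{x_j} = 0,
\]
using that (C1) holds for $\ovx$ and that $\alpha_i \neq 0$. Summing over $t = 1,\ldots,m$ yields that the shifted tuple satisfies \eqref{eq:semenov4}, so it is indeed a solution.

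Next I would verify (C2), i.e., minimality of each $J_t$ as a cluster of the new solution. Suppose toward contradiction that some $J' \subsetneq J_t$ is nonempty and satisfies
\[
\sum_{j \in J'} q_{ij}\,\alpha_i^{x_j+\beta_t} = 0
\]
for every $i$. Since $J' \subseteq J_t$, all indices in $J'$ are shifted by the same $\beta_t$, so we can factor out $\alpha_i^{\beta_t}$ and divide (using $\alpha_i \neq 0$) to obtain $\sum_{j \in J'} q_{ij}\,\alpha_i^{x_j} = 0$ for every $i$. This contradicts (C2) for $\ovx$ with respect to the cluster $J_t$.

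I do not anticipate any obstacle: the lemma is essentially bookkeeping, and the only subtlety is confirming that the shift preserves \emph{both} conditions (C1) and (C2), which follows from factoring out the common power $\alpha_i^{\beta_t}$ within a single cluster. Once these two checks are written out, the lemma is complete.
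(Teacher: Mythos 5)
Your proof is correct and is exactly the direct verification the paper has in mind: the paper states that the lemma ``follows immediately from the definitions'' and supplies no written proof, and your factoring of $\alpha_i^{\beta_t}$ out of each cluster sum is the intended bookkeeping. Checking (C2) as well as (C1) is a nice touch, since the statement does claim the cluster structure is preserved, and your argument handles that cleanly.
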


\subsection{Max span of a system}
For a system $E$ of type \eqref{eq:semenov4}, the definition of the maximum span is extended in the natural way, as follows:
$$
\mspn(E)=
\max \Set{\spn_J(\ovx)}{J \mbox{ is a cluster in a solution $\ovx$ for $E$}}.
$$
If $E$ has no solutions, then $\mspn(E)=0$.
We obtain an upper bound on $\mspn(E)$ below.

A collection $A_1,\ldots,A_m$ (where $m\ge 2$)
of finite nonempty subsets of $\MZ$
is \emph{non-separable} if the union of closed intervals
$$
\bigcup_{i=1}^m\ [\min(A_i),\max(A_i)]
$$
is an interval. Otherwise it is separable.
The following lemma follows easily from this definition.

\begin{lemma}\label{le:non-separable-length}
If finite nonempty subsets $A_1,\ldots,A_m\subseteq\MZ$
are non-separable, then the following inequality holds:
$$
\max\big(\bigcup_{i=1}^m A_i\big) - \min\big(\bigcup_{i=1}^m A_i\big)
\ \le\
\sum_{i=1}^m \big(\max(A_i)-\min(A_i)\big).
$$
\end{lemma}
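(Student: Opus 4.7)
The plan is to reduce the claim to the standard fact that the length of a finite union of intervals is at most the sum of the individual lengths, with a little extra care coming from the non-separability hypothesis which ensures the union is actually an interval (so the length of the union equals its right endpoint minus its left endpoint).

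First, I would set $a_i = \min(A_i)$, $b_i = \max(A_i)$, and $\ell_i = b_i - a_i$, and relabel the sets so that $a_1 \le a_2 \le \cdots \le a_m$. Observe that $\min(\bigcup_i A_i) = a_1$ and $\max(\bigcup_i A_i) = \max_i b_i$, so the inequality to prove is
\[
\max_i b_i - a_1 \;\le\; \sum_{i=1}^m \ell_i.
\]

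Next, I would translate non-separability into a useful combinatorial statement: define the ``running maximum'' $M_j = \max(b_1, \ldots, b_j)$. I claim that non-separability forces $a_{j+1} \le M_j$ for each $j = 1, \ldots, m-1$. Indeed, if $a_{j+1} > M_j$ then $[a_{j+1}, b_{j+1}] \subseteq \bigcup_{i > j}[a_i,b_i]$ (by the sorting, since $a_i \ge a_{j+1}$ for $i > j$) while $[a_1,M_j] \supseteq \bigcup_{i \le j}[a_i,b_i]$, so the union $\bigcup_i [a_i,b_i]$ would split into two parts separated by the gap $(M_j, a_{j+1})$, contradicting the hypothesis.

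Finally, I would telescope. For each $j$, $M_{j+1} = \max(M_j, b_{j+1})$, so $M_{j+1} - M_j \le \max(0, b_{j+1} - M_j) \le b_{j+1} - a_{j+1} = \ell_{j+1}$, where the last inequality uses $a_{j+1} \le M_j$. Summing these increments for $j = 1, \ldots, m-1$ and adding $M_1 - a_1 = b_1 - a_1 = \ell_1$ yields $M_m - a_1 \le \sum_{i=1}^m \ell_i$, which is exactly the desired inequality. The only non-routine step is the translation of non-separability into $a_{j+1} \le M_j$ after sorting, but this is immediate once one writes the contrapositive, so I do not expect any real obstacle.
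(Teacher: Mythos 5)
Your argument is correct and supplies exactly the routine verification that the paper omits (the paper states this lemma with no proof, remarking only that it ``follows easily from this definition''); sorting by left endpoints, extracting $a_{j+1}\le M_j$ from connectedness of the union of intervals, and telescoping the running maximum is the intended argument. One cosmetic slip: the containment ``$[a_{j+1},b_{j+1}]\subseteq\bigcup_{i>j}[a_i,b_i]$'' is trivially true and not the fact you need --- what you need (and what your parenthetical justification actually establishes) is $\bigcup_{i>j}[a_i,b_i]\subseteq[a_{j+1},\infty)$, so that the nonempty open gap $(M_j,a_{j+1})$ disconnects the union.
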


Let $\ovx$ be a solution for \eqref{eq:semenov4} and
$\CJ$ a cluster structure for $\ovx$.
Then $\ovx$ is a solution for each individual equation in \eqref{eq:semenov4}.
Hence, each equation in \eqref{eq:semenov4} has
a block structure $\CI_i=\{I_{i1},\ldots,I_{i \, m_i}\}$ for $\ovx$.
%We can schematically  visualize separability of the sets $\ovx_{I_{ij}}$ as in Figure \ref{fi:block-diagram}.
%\begin{figure}[h]
%\centering
%\includegraphics[scale=1]{block-diagram.pdf}
%\caption{Hypothetical block-diagram for a solution $x_1,\ldots,x_{k}$
%(satisfying $x_1<\dots<x_{k}$)
%of a system \eqref{eq:semenov4}:
%in (a) the set of blocks is non-separable, i.e. there is one single cluster, whereas (b) has a separable %set of blocks, and two clusters.}
%\label{fi:block-diagram}
%\end{figure}
We say that $\CI_1,\ldots,\CI_s$ are \emph{compatible} with $\CJ$ if
for any cluster $J\in\CJ$ and any block $I_{ij}\in\CI_i$ we have
$$
I_{ij}\subseteq J \mbox{ or }I_{ij}\cap J=\emptyset.
$$
For any cluster structure $\CJ$, (C1) obviously implies the existence
of a compatible block structure $\CI_i$
for each individual equation in \eqref{eq:semenov4}, and the identity \begin{equation}\label{eq:cluster-union}
J=\bigcup_{I_{ij}\subseteq J}I_{ij}
\end{equation}
holds for every $J\in\CJ$ because every index in $\{1,\ldots,k\}$ belongs to at least one block $I_{ij}$ (since we assume that there is at least one nonzero $q_{ij}$ corresponding to each $x_i$).

\begin{lemma}\label{le:non-separable-blocks}
If $\CJ$ and $\CI_1,\ldots,\CI_s$ are compatible,
then for every $J\in\CJ$, the collection of sets
$\Set{\ovx_{I_{ij}}}{I_{ij}\subseteq J}$
is non-separable.
\end{lemma}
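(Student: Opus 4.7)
The plan is to argue by contradiction and contradict the minimality condition (C2) for the cluster $J$. Specifically, if the collection $\Set{\ovx_{I_{ij}}}{I_{ij}\subseteq J}$ were separable, I would find a way to split $J$ into two nonempty subsets that each satisfy (C1), contradicting the fact that $J$ is a cluster.

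First, I would unpack the hypothesis of separability: by definition, the union $\bigcup_{I_{ij}\subseteq J}[\min(\ovx_{I_{ij}}),\max(\ovx_{I_{ij}})]$ is not an interval, so there is some integer $M$ such that every block $I_{ij}\subseteq J$ (for every $i$) is classified as one of two types: either all $x_{l}$ for $l\in I_{ij}$ are $\le M$, or all of them are $>M$, and both types actually occur. This is the key consequence I would extract from the non-separability definition.

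Next, I would define
\[
J_1=\{\,j\in J : x_j\le M\,\}\qquad\text{and}\qquad J_2=\{\,j\in J : x_j>M\,\},
\]
which form a partition of $J$ into two nonempty subsets. Using compatibility of the block structures $\CI_i$ with $\CJ$ together with the classification above, every block $I_{ij}\subseteq J$ satisfies $I_{ij}\subseteq J_1$ or $I_{ij}\subseteq J_2$. Combined with the identity \eqref{eq:cluster-union}, which gives $J=\bigcup_{I_{ij}\subseteq J}I_{ij}$ for \emph{each} equation index $i$, this means that for every $i\in\{1,\dots,s\}$ and every $\ell\in\{1,2\}$,
\[
J_\ell=\bigcup_{I_{ij}\in\CI_i,\ I_{ij}\subseteq J_\ell}I_{ij}.
\]

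The last step is the conclusion: since each $I_{ij}$ satisfies (B1) for equation $i$, summing over the blocks inside $J_\ell$ gives $\sum_{j\in J_\ell}q_{ij}\alpha_i^{x_j}=0$ for every $i$, so $J_\ell$ satisfies (C1). As both $J_1$ and $J_2$ are nonempty proper subsets of $J$, this violates (C2) for the cluster $J$, a contradiction. I expect the only subtle step to be step two, namely carefully translating separability of the family of sets $\ovx_{I_{ij}}$ into the existence of a single separator $M$ that cleanly partitions the blocks from \emph{all} $s$ equations simultaneously; once that is in hand, compatibility does the remaining work mechanically.
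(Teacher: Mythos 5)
Your proof is correct and is essentially the argument the paper intends: the paper's own proof is the single line ``Separability contradicts the property (C2) of $J$,'' and your write-up (extracting an integer separator $M$ from the gap in the union of intervals, using compatibility to see that each block lands wholly in $J_1$ or $J_2$, and summing (B1) over the blocks in each part to get (C1)) is exactly the expansion of that one-liner.
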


\begin{proof}
Separability contradicts the property (C2) of $J$.
\end{proof}

\begin{proposition}\label{pr:cluster-length-bound}
Let $E$ be a system of type \eqref{eq:semenov4}, and let $E_i$ denote the $i$th equation in $E$. Then we have
$$\mspn(E) \leq sk\max_{1\leq i \leq s}(\mspn(E_i)).$$
\end{proposition}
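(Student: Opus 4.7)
The plan is to apply the machinery of block structures and non-separability that was just set up, summing contributions across all $s$ equations. Fix an arbitrary solution $\ovx$ for $E$, a cluster structure $\CJ$ for $\ovx$, and a cluster $J\in\CJ$. By the discussion preceding Lemma \ref{le:non-separable-blocks}, for each $i=1,\ldots,s$ we may choose a block structure $\CI_i=\{I_{i1},\ldots,I_{im_i}\}$ for $\ovx$ with respect to the $i$th equation $E_i$, such that $\CI_1,\ldots,\CI_s$ are compatible with $\CJ$. The identity \eqref{eq:cluster-union} then yields $\ovx_J = \bigcup_{I_{ij}\subseteq J}\ovx_{I_{ij}}$.

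Next I would invoke Lemma \ref{le:non-separable-blocks} to conclude that the finite family $\{\ovx_{I_{ij}}: I_{ij}\subseteq J\}$ (indexed over all $i=1,\ldots,s$ and all $j$ with $I_{ij}\subseteq J$) is non-separable. Lemma \ref{le:non-separable-length} then gives
$$
\spn_J(\ovx)\ =\ \max(\ovx_J)-\min(\ovx_J)\ \le\ \sum_{i=1}^s\sum_{\substack{I_{ij}\in \CI_i\\ I_{ij}\subseteq J}}\spn_{I_{ij}}(\ovx).
$$

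To finish, I would bound each inner sum: for fixed $i$, the blocks in $\CI_i$ are pairwise disjoint nonempty subsets of $\{1,\ldots,k\}$, so at most $|J|\le k$ of them can fit inside $J$; moreover each $\spn_{I_{ij}}(\ovx)\le \mspn(E_i)$ by definition. Hence the inner sum is at most $k\cdot\mspn(E_i)$, and summing over $i$ gives
$$
\spn_J(\ovx)\ \le\ \sum_{i=1}^s k\cdot\mspn(E_i)\ \le\ sk\max_{1\le i\le s}\mspn(E_i).
$$
Since $\ovx$ and $J$ were arbitrary, taking the supremum of the left-hand side over all clusters in all solutions yields the desired bound on $\mspn(E)$.

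No step looks like a serious obstacle: the entire argument is packaging of definitions together with the two lemmas already proved. The only point requiring mild care is the double-counting in step six (ensuring that the block count is at most $k$ per equation rather than $k$ in total), and verifying that compatibility of $\CI_i$ with $\CJ$ genuinely holds simultaneously for all $i$, which was already established in the prose preceding Lemma \ref{le:non-separable-blocks}.
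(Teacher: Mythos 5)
Your proposal is correct and follows essentially the same route as the paper's proof: reduce $\spn_J(\ovx)$ to a sum of block spans via the identity \eqref{eq:cluster-union} together with Lemmas \ref{le:non-separable-length} and \ref{le:non-separable-blocks}, then count at most $k$ blocks per equation and $s$ equations. Your explicit per-equation counting is just a slightly more detailed version of the paper's final inequality $\sum_{I_{ij}\subseteq J}\mspn(E_i)\le sk\max_i\mspn(E_i)$.
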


\begin{proof} Let $\CJ$ be a cluster structure for a solution $\ovx$ of $E$, and let $J \in \CJ$. We have
\begin{align*}
\max(\ovx_{J})-\min(\ovx_{J})
&=\max\Big(\bigcup_{I_{ij}\subseteq J} \ovx_{I_{ij}}\Big)-\min\Big(\bigcup_{I_{ij}\subseteq J} \ovx_{I_{ij}}\Big)
&&\mbox{(the identity \eqref{eq:cluster-union})}\\
&\le
\sum_{I_{ij}\subseteq J}
\spn_{I_{ij}}(\ovx)
&&\mbox{(Lemmas \ref{le:non-separable-length} and \ref{le:non-separable-blocks}})\\
&\le
\sum_{I_{ij}\subseteq J}
\mspn(E_i)
&&\\
&\le
sk\max_{1\leq i \leq s}(\mspn(E_i)). &&
\end{align*}
\end{proof}
\begin{corollary}\label{co:mspan-poly}
    For systems $E$ of type \eqref{eq:semenov4}, $\mspn(E)$ is bounded by a polynomial in $\size(E)$.
\end{corollary}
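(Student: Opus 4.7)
The plan is to simply combine Proposition \ref{pr:cluster-length-bound} with the single-equation bound from Lemma \ref{le:max-span}. Concretely, write $E$ as a system of $s$ equations $E_1, \ldots, E_s$ of type \eqref{eq:semenov3}, each in $k$ unknowns, and observe that $\size(E_i) \leq \size(E)$ for every $i$, while both $s$ and $k$ are at most $\size(E)$ by the definition of size given in \eqref{eq:size}.

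By Lemma \ref{le:max-span}, $\mspn(E_i) = O(\size^4(E_i)) = O(\size^4(E))$, uniformly in $i$. Substituting this into the conclusion of Proposition \ref{pr:cluster-length-bound} gives
\[
\mspn(E) \;\leq\; sk \cdot \max_{1 \leq i \leq s} \mspn(E_i) \;=\; O(\size^2(E)) \cdot O(\size^4(E)) \;=\; O(\size^6(E)),
\]
which is the desired polynomial bound.

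There is essentially no obstacle here: the nontrivial work was already done in Proposition \ref{pr:cluster-length-bound} (which reduces the span of a cluster to a sum of spans of compatible blocks) and in Lemma \ref{le:max-span} (which bounds the span of a single block). The only mild point to mention is that the bound $s, k \leq \size(E)$ comes directly from the fact that the encoding contributes at least one bit per equation and per unknown, so the combinatorial factor $sk$ is automatically polynomial in $\size(E)$. If desired, one can also note that in the fixed-base case the analogous bound improves to $O(\size^4(E))$, since Lemma \ref{le:max-span} then yields $\mspn(E_i) = O(\size^2(E))$.
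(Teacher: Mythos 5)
Your proof is correct and is exactly the paper's argument: the paper's own proof of Corollary \ref{co:mspan-poly} simply cites Proposition \ref{pr:cluster-length-bound} and Lemma \ref{le:max-span}, and you have just made the resulting $O(\size^6(E))$ computation explicit (including the harmless observation that $s,k\leq\size(E)$).
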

\begin{proof}
    Follows immediately from Proposition \ref{pr:cluster-length-bound} and Lemma \ref{le:max-span}.
\end{proof}

\subsection{An $\NP$-certificate for \eqref{eq:semenov4}}
\label{se:certificate_for_semenov4}

\begin{theorem}\label{th:Semenov-NP2}
If a system $E$ of type \eqref{eq:semenov4} has a solution,
then it has a solution $x_1,\ldots,x_k\in\MZ$ satisfying
\begin{equation}\label{eq:Semenov-bound2}
0
\ \le\
x_1,\ldots,x_k
\ \le \mspn(E).
\end{equation}
\end{theorem}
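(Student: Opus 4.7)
The plan is to mirror the proof of Theorem \ref{th:Semenov-NP}, replacing the role of blocks by clusters throughout. The two key ingredients are already in place: Lemma \ref{le:shift-blocks2} guarantees that we may freely add integer combinations of the cluster indicator vectors $\Delta_{J_j}$ to a solution without leaving the solution set, while the definition of $\mspn(E)$ provides a uniform upper bound on the span of any cluster appearing in any solution. No new machinery is needed.

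Concretely, I would start from an arbitrary solution $\ovx = (x_1, \ldots, x_k) \in \MZ^k$ of $E$ and fix a cluster structure $\CJ = \{J_1, \ldots, J_m\}$ for $\ovx$. Such a structure always exists: $\{1,\ldots,k\}$ itself satisfies (C1) because $\ovx$ solves \eqref{eq:semenov4}, and whenever a subset satisfies (C1) but not (C2), its complement inside that subset also satisfies (C1) by linearity, so iterative refinement terminates in a partition into clusters. I would then shift coordinates cluster-by-cluster downward, defining
$$
\ovx' = \ovx - \sum_{j=1}^m \min(\ovx_{J_j}) \cdot \Delta_{J_j}.
$$
By Lemma \ref{le:shift-blocks2}, $\ovx'$ is again a solution of \eqref{eq:semenov4}.

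It remains only to verify the coordinate bounds. Since the clusters partition $\{1, \ldots, k\}$, each index $i$ lies in a unique cluster $J_{j(i)}$, and the $i$th coordinate of $\ovx'$ equals $x_i - \min(\ovx_{J_{j(i)}})$. This is nonnegative by construction, and bounded above by $\max(\ovx_{J_{j(i)}}) - \min(\ovx_{J_{j(i)}}) = \spn_{J_{j(i)}}(\ovx) \leq \mspn(E)$ by the definition of $\mspn(E)$, which yields \eqref{eq:Semenov-bound2}. There is no substantive obstacle: the argument is a direct transplant of the single-equation proof, with Lemma \ref{le:shift-blocks2} playing exactly the role that Lemma \ref{le:shift-blocks} played there. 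Combined with Corollary \ref{co:mspan-poly} and (the system version of) Lemma \ref{le:check-sol}, this immediately yields an $\NP$-certificate for \eqref{eq:semenov4}.
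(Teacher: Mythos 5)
Your proof is correct and is essentially identical to the paper's, which simply says the argument of Theorem \ref{th:Semenov-NP} goes through with Lemma \ref{le:shift-blocks2} in place of Lemma \ref{le:shift-blocks}; your cluster-by-cluster downward shift is exactly that argument. The extra remark justifying the existence of a cluster structure is a harmless (and correct) addition that the paper leaves implicit.
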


\begin{proof}
Same as the proof of Theorem \ref{th:Semenov-NP},
using Lemma \ref{le:shift-blocks2} instead of Lemma \ref{le:shift-blocks}.
\end{proof}
Finally, from Lemma \ref{le:check-sol}, Corollary \ref{co:mspan-poly} and Theorem \ref{eq:Semenov-bound2}, we obtain the following.
\begin{corollary}\label{co:Semenov-NP2}
The Diophantine problem for systems of type \eqref{eq:semenov4}
belongs to $\NP$.
\end{corollary}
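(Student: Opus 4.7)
The plan is to combine the three preceding results in a direct manner: Theorem~\ref{th:Semenov-NP2} guarantees the existence of a small solution, Corollary~\ref{co:mspan-poly} bounds ``small'' in terms of $\size(E)$, and Lemma~\ref{le:check-sol} certifies such a solution in polynomial time. Concretely, given an instance $E$ of type \eqref{eq:semenov4}, the guess for the nondeterministic algorithm will be a tuple $(x_1,\ldots,x_k)\in\MZ^k$ satisfying $0\le x_j\le \mspn(E)$ for every $j$.

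First I would invoke Theorem~\ref{th:Semenov-NP2} to assert that if $E$ has any solution, then it has one in this bounded range. By Corollary~\ref{co:mspan-poly}, $\mspn(E)$ is bounded by a polynomial in $\size(E)$, so each $x_j$ requires only $O(\log \mspn(E))=O(\log \size(E))$ bits; hence the entire guessed tuple fits into a certificate of polynomial length. This ensures the certificate satisfies the size requirement for membership in $\NP$.

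Next I would verify the guess in polynomial time. Lemma~\ref{le:check-sol} was stated for a single equation, but the author's remark preceding the lemma indicates its proof extends to systems; in any case, one may simply apply the lemma separately to each of the $s$ equations of $E$. For every $i$, checking whether $(x_1,\ldots,x_k)$ satisfies the $i$th equation takes time polynomial in $\CM(E_i,\ovx) = \max\{\size(E_i),\log M\}$, where $M=\max\{1,|x_1|,\ldots,|x_k|\}\le \mspn(E)$. Since $\size(E_i)\le \size(E)$ and $\log M$ is polynomial in $\size(E)$ by Corollary~\ref{co:mspan-poly}, and since $s\le \size(E)$, the total verification time is polynomial in $\size(E)$.

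There is essentially no obstacle here; the work was done in the preceding sections. The only mild subtlety worth stating explicitly is that Lemma~\ref{le:check-sol} must be applied to each equation of the system, which is legitimate because the bound on $M$ depends only on the guessed tuple (common to all equations) and on a polynomial bound for $\mspn(E)$ that applies uniformly. Combining the existence of a polynomially bounded solution with a polynomial-time verifier yields the standard $\NP$-certificate, establishing the corollary.
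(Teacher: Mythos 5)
Your proposal is correct and follows exactly the paper's argument: the paper likewise obtains this corollary by combining Theorem~\ref{th:Semenov-NP2} (existence of a solution bounded by $\mspn(E)$), Corollary~\ref{co:mspan-poly} (a polynomial bound on $\mspn(E)$), and Lemma~\ref{le:check-sol} (polynomial-time verification). Your extra remark about applying the verification lemma equation-by-equation is a harmless elaboration of the same idea.
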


\section{Complexity upper bound for general systems of equations}
\label{se:semenov-np-non-homo-system}
\subsection{Non-homogeneous equations}
Let $\alpha_1,\ldots,\alpha_s\in\CQ^*\setminus\CU$, and consider a system of non-homogeneous equations
\begin{equation}\label{eq:semenov5}
\arraycolsep=2pt
\left\{
\begin{array}{cl}
q_{11}\alpha_1^{x_1}+\dots+q_{1k}\alpha_1^{x_k}&= q_{10},\\
\vdots&\\
q_{s1}\alpha_s^{x_1}+\dots+q_{sk}\alpha_s^{x_k}&= q_{s0},\\
\end{array}
\right.
\end{equation}
with $q_{ij}\in\MZ[\alpha_i]$. It is easy to see that a system $E$ of this form has a solution
if and only if the homogeneous system $E'$
\begin{equation}\label{eq:semenov6}
\arraycolsep=2pt
\left\{
\begin{array}{cl}
q_{11}\alpha_1^{x_1}+\dots+q_{1k}\alpha_1^{x_k}-q_{10}\alpha_1^{x_0}&=0\\
\vdots\ \ \ \ \ \ \ \ \ \ \ \ \ &\\
q_{s1}\alpha_s^{x_1}+\dots+q_{sk}\alpha_s^{x_k}-q_{s0}\alpha_s^{x_0}&=0\\
\end{array}
\right.
\end{equation}
has a solution. Hence, an $\NP$-certificate for $E'$
can be used as an $\NP$-certificate for $E$. Since $\size(E') = \size(E)$, it follows from Corollary \ref{co:Semenov-NP2} that the Diophantine problem for non-homogeneous systems of type \eqref{eq:semenov5} belongs to $\NP$.

\subsection{Systems of equations with roots of unity}\label{se:gen-sys}

Finally, we consider the most general systems of equations with exponents, where
$\alpha_1,\ldots,\alpha_s$ are allowed to be roots of unity.
As above, the non-homogeneous case can be reduced to the homogeneous one, so in fact we need only consider homogeneous systems. Recall that there is a polynomial-time algorithm (see \cite{Bradford-Davenport:1988}) that takes as input the minimal polynomial of $\alpha_i$ and determines whether $\alpha_i$ is a root of unity. If it is a root of unity, this algorithm also provides the order of $\alpha_i$ in the group $\CU$. Hence, we consider systems of the form
\begin{equation}\label{eq:semenov8}
\arraycolsep=2pt
\left\{
\begin{array}{cl}
q_{11}\alpha_1^{x_1}+\dots+q_{1k}\alpha_1^{x_k}&= 0\\
\vdots&\\
q_{t1}\alpha_t^{x_1}+\dots+q_{tk}\alpha_t^{x_k}&= 0\\
\vdots&\\
q_{s1}\alpha_s^{x_1}+\dots+q_{sk}\alpha_s^{x_k}&= 0\\
\end{array}
\right.
\end{equation}
with $\alpha_1,\ldots,\alpha_t \not\in \CU$ and $\alpha_{t+1},\ldots,\alpha_s\in \CU$ for some $t\in \{0,\ldots,s\}$, and $q_{ij}\in \MZ[\alpha_i]$. We may assume that $n_i$ is the order of $\alpha_i$ in $\CU$ for $t < i \leq s$. From the fact that $\deg(\alpha_i) = \varphi(n_i)$ (where $\varphi$ denotes Euler's totient function), and using the well-known lower bound $\varphi(n) \geq \sqrt{\frac{n}{2}}$, we obtain the following bound on $n_i$.
\begin{equation}\label{eq:n-bound}
n_i < 2(\deg(\alpha_i))^2 < 2\size^2(E).
\end{equation}
We also assume that every unknown $x_i$ is non-trivially involved in $E$, i.e. for every $j\in \{1,\ldots,k\}$ there exists $i\in\{1,\ldots,s\}$ such that $q_{ij}\ne 0$.

First, assume that $E$ is an equation of type \eqref{eq:semenov8} in which all of
the bases are roots of unity, and let $N = \lcm(n_1,\ldots,n_s)$.
Clearly, if $\ovx = (x_1,\ldots,x_k)$ is a solution to $E$,
and $x_i\equiv x_i'\bmod N$, then
$\ovx'=(x_1',\ldots,x_k')$ is another solution to $E$.
Hence, if $E$ has a solution, then it has a solution
$\ovx = (x_1,\ldots,x_k)$ such that $0 \leq x_1,\ldots,x_k < N$.
From \eqref{eq:n-bound} we have
\begin{equation}\label{eq:N-exp-bound}
N < 2^k\size^{2k}(E),
\end{equation}
so that
\begin{equation}\label{eq:N-bound}
  \log N = O(2k \log(\size(E))) = O(\size^2(E)).
\end{equation}
By Lemma \ref{le:check-sol}, $\ovx$ is an $\NP$-certificate for decidability of $E$.

Now suppose that at least one, but not all, of the bases are roots of unity, and consider the subsystem $E_{\leq t}$ of the first $t$ equations (i.e. the equations where $\alpha_i \not \in \CU$). Let $\ovx = (x_1,\ldots,x_k)$ be a solution to $E$. In particular, $\ovx$ is a solution to $E_{\leq t}$, and we may choose a cluster structure $\CJ = \{J_1,\ldots,J_m\}$ corresponding to $\ovx$ and $E_{\leq t}$ (i.e. $\CJ$ is not necessarily a cluster structure with respect to $E$). The following variation on Lemma \ref{le:shift-blocks2} follows easily from the preceding discussion.

\begin{lemma}\label{le:shift-blocks3}
Let $E$ be a system of type \eqref{eq:semenov8} as described above (i.e. where $1 \leq t < s$), and let $\ovx$ be a solution of $E$. Let $\{J_1,\ldots,J_m\}$ be a cluster structure corresponding to $\ovx$ and $E_{\leq t}$, and let $N = \lcm(n_{t+1},\ldots,n_s)$. Then
$$
\ovx+N\beta_1\Delta_{J_1}+\dots+N\beta_m\Delta_{J_m}
$$
is a solution for $E$ with the same cluster structure
for any $\beta_1,\ldots,\beta_m\in\MZ$.
\end{lemma}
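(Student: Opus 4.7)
The plan is to verify the two parts of the conclusion — that $\ovx' := \ovx+N\beta_1\Delta_{J_1}+\dots+N\beta_m\Delta_{J_m}$ is a solution of $E$ and that $\{J_1,\ldots,J_m\}$ remains a cluster structure — by splitting along the horizontal divide between the non-root-of-unity equations (indices $i\le t$) and the root-of-unity equations (indices $i>t$). The two regimes require completely different tools, but each reduces to a result already in hand.

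For the top block $E_{\le t}$, I would invoke Lemma \ref{le:shift-blocks2} directly. By hypothesis, $\{J_1,\ldots,J_m\}$ is a cluster structure for $\ovx$ with respect to $E_{\le t}$, which is itself a homogeneous system of type \eqref{eq:semenov4}. Applying Lemma \ref{le:shift-blocks2} with the integer shift coefficients $N\beta_1,\ldots,N\beta_m$ immediately yields that $\ovx'$ is a solution of $E_{\le t}$ and that the cluster structure is preserved. This single invocation simultaneously handles the first $t$ equations and the full statement about cluster structures (since clusters are defined relative to $E_{\le t}$).

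For the bottom block I would exploit the divisibility $n_i\mid N$ for each $i\in\{t+1,\ldots,s\}$. Since $\CJ$ is a partition of $\{1,\ldots,k\}$, each coordinate of $\ovx'$ has the form $x_j'=x_j+N\beta_{l(j)}$, where $l(j)$ is the unique index with $j\in J_{l(j)}$. Then $\alpha_i^{x_j'}=\alpha_i^{x_j}\cdot\alpha_i^{N\beta_{l(j)}}=\alpha_i^{x_j}$ because $\alpha_i^N=1$, so the $i$th equation is literally unchanged when $\ovx$ is replaced by $\ovx'$. Summing over $j$ gives $\sum_j q_{ij}\alpha_i^{x_j'}=0$ as required.

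I do not anticipate any real obstacle here: the lemma is essentially a bookkeeping combination of Lemma \ref{le:shift-blocks2} with the elementary fact that exponents of roots of unity may be reduced modulo their order. The only small point worth being explicit about is that the phrase ``the same cluster structure'' in the conclusion refers to a cluster structure with respect to $E_{\le t}$ (matching the hypothesis), rather than with respect to the full system $E$; this is already supplied by the application of Lemma \ref{le:shift-blocks2} in the first step.
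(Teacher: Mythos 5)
Your proof is correct and is exactly the argument the paper intends: the paper gives no explicit proof, asserting only that the lemma ``follows easily from the preceding discussion,'' and that discussion consists precisely of Lemma \ref{le:shift-blocks2} (applied to the homogeneous subsystem $E_{\leq t}$ with shift coefficients $N\beta_j$) together with the observation that exponents may be reduced modulo $N$ in the root-of-unity equations since $\alpha_i^N=1$ for $i>t$. Your closing remark that ``the same cluster structure'' refers to clusters with respect to $E_{\leq t}$ is a correct and worthwhile clarification.
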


\begin{theorem}\label{th:non-homogeneous-system-with-1}
If a system $E$ of type \eqref{eq:semenov8} has a solution,
then it has a solution $\ovx$ (with the same cluster structure) satisfying
\begin{equation}
\label{eq:Semenov-bound-general3}
0 \leq x_1,\ldots,x_k < N + \mspn(E_{\leq t})
\end{equation}
where $N = \lcm(n_{t+1},\ldots,n_s)$.
\end{theorem}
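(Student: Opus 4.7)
The plan is to combine the normalization argument from the proof of Theorem \ref{th:Semenov-NP2} with the restriction on allowable shifts imposed by the root-of-unity equations, exploiting Lemma \ref{le:shift-blocks3}. Let $\ovx = (x_1,\ldots,x_k)$ be any solution of $E$, and fix a cluster structure $\CJ = \{J_1,\ldots,J_m\}$ for $\ovx$ with respect to the subsystem $E_{\leq t}$. The point of restricting attention to $E_{\leq t}$ is that the root-of-unity equations are invariant under shifts by multiples of $N = \lcm(n_{t+1},\ldots,n_s)$, so Lemma \ref{le:shift-blocks3} gives us freedom to shift entries indexed by each $J_j$ by any integer multiple of $N$ while retaining a solution of the full system $E$.

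Next, for each $j = 1,\ldots,m$, choose an integer $\beta_j$ such that
\[
0 \;\le\; \min(\ovx_{J_j}) + N\beta_j \;<\; N,
\]
which is possible since consecutive multiples of $N$ differ by $N$. Define
\[
\ovx' \;=\; \ovx + N\beta_1 \Delta_{J_1} + \cdots + N\beta_m \Delta_{J_m}.
\]
By Lemma \ref{le:shift-blocks3}, $\ovx'$ is a solution of $E$ with the same cluster structure $\CJ$.

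Finally, I would verify the desired bound on each coordinate. For any $i \in J_j$, we have $x_i' = x_i + N\beta_j$, so $\min(\ovx'_{J_j}) \in [0,N)$ and
\[
x_i' \;\le\; \min(\ovx'_{J_j}) + \spn_{J_j}(\ovx') \;=\; \min(\ovx'_{J_j}) + \spn_{J_j}(\ovx).
\]
Since $J_j$ is a cluster for the solution $\ovx$ of $E_{\leq t}$, we have $\spn_{J_j}(\ovx) \le \mspn(E_{\leq t})$ by definition, which gives $x_i' < N + \mspn(E_{\leq t})$ together with $x_i' \ge 0$, as required.

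The argument is essentially bookkeeping once Lemma \ref{le:shift-blocks3} is in hand; there is no real obstacle. The only subtlety worth noting is that $\CJ$ must be chosen as a cluster structure for $E_{\leq t}$ (not for all of $E$), since otherwise Lemma \ref{le:shift-blocks3} would not apply; but conversely this is exactly why the span bound in the conclusion involves $\mspn(E_{\leq t})$ rather than $\mspn(E)$.
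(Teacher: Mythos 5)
Your proposal is correct and follows essentially the same route as the paper's own proof: fix a cluster structure for $E_{\leq t}$, reduce the minimum of each cluster modulo $N$ via Lemma \ref{le:shift-blocks3}, and bound the coordinates by $N + \mspn(E_{\leq t})$ using the definition of maximum span. The only difference is that you spell out the final coordinate-by-coordinate verification, which the paper leaves implicit.
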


\begin{proof}
Suppose that $\ovy$ is a solution to $E$, and set $\beta_i$ so that $0 \leq \min(\ovy_{J_i})+N\beta_i  < N$ (i.e. reduce $\min(\ovy_{J_i})$ modulo $N$). Then it follows from the definition of $\mspn(E_{\leq t})$ that $$\ovx = \ovy + N\beta_1\Delta_{J_1}+\dots+N\beta_m\Delta_{J_m}$$
satisfies \eqref{eq:Semenov-bound-general3}, and by Lemma \ref{le:shift-blocks3} it is also a solution to $E$.
\end{proof}

By \eqref{eq:N-bound}, Corollary \ref{co:mspan-poly} and Lemma \ref{le:check-sol}, a solution $x_1,\ldots,x_k$ for \eqref{eq:semenov8}
satisfying \eqref{eq:Semenov-bound-general3} is an $\NP$-certificate; thus, we have proved the following.

\begin{corollary}\label{co:Semenov-NP3}
The Diophantine problem for systems \eqref{eq:semenov8}
belongs to $\NP$.
\end{corollary}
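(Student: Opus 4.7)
The plan is to assemble an $\NP$-certificate from the ingredients already established, namely a solution $\ovx \in \MZ^k$ whose bit-length is polynomial in $\size(E)$ and whose correctness can be checked in polynomial time. I would organize the proof by splitting on $t$, the number of bases in $E$ that are not roots of unity, into the cases $t = s$, $t = 0$, and $1 \leq t < s$.

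When $t = s$, no base is a root of unity and the claim is precisely Corollary \ref{co:Semenov-NP2}. When $t = 0$, all bases are roots of unity, and the observation preceding Lemma \ref{le:shift-blocks3} shows that reducing each $x_i$ modulo $N = \lcm(n_1,\ldots,n_s)$ again yields a solution; hence if $E$ is satisfiable, it admits a solution with $0 \leq x_i < N$. By \eqref{eq:N-bound}, $\log N = O(\size^2(E))$, so the bit-length of such an $\ovx$ is polynomial in $\size(E)$. When $1 \leq t < s$, Theorem \ref{th:non-homogeneous-system-with-1} gives a solution with $0 \leq x_i < N + \mspn(E_{\leq t})$, where $N = \lcm(n_{t+1},\ldots,n_s)$. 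Again \eqref{eq:N-bound} bounds $\log N$ polynomially in $\size(E)$, and Corollary \ref{co:mspan-poly} applied to the homogeneous subsystem $E_{\leq t}$ (whose size is at most $\size(E)$) bounds $\mspn(E_{\leq t})$ polynomially in $\size(E)$; therefore $\log(N + \mspn(E_{\leq t}))$ is polynomial in $\size(E)$.

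In each case, letting $M = \max\{1,|x_1|,\ldots,|x_k|\}$, the guessed $\ovx$ has $\log M$ polynomial in $\size(E)$ and can thus be encoded in polynomial space. To verify it, I would invoke Lemma \ref{le:check-sol} applied separately to each of the $s$ equations of $E$: the verification runs in time polynomial in $\CM(E_i, \ovx) = \max\{\size(E_i), \log M\}$, which is polynomial in $\size(E)$, and the total over $s \leq \size(E)$ equations is still polynomial.

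Essentially no step here is a real obstacle; every ingredient has already been proved, and the argument is just the bookkeeping needed to assemble them. The one place where care is needed is making sure that the size bound $\log(N + \mspn(E_{\leq t}))$ passes cleanly through Lemma \ref{le:check-sol} for all $s$ equations simultaneously (including the equations with roots-of-unity bases), but this is immediate because Lemma \ref{le:check-sol} only requires $\log M$ and the size of a single equation as inputs, and both are polynomial in $\size(E)$.
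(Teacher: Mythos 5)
Your proposal is correct and follows essentially the same route as the paper: guess a solution bounded via Theorem \ref{th:non-homogeneous-system-with-1} (or its $t=s$, $t=0$ degenerate cases), bound its bit-length using \eqref{eq:N-bound} and Corollary \ref{co:mspan-poly}, and verify it with Lemma \ref{le:check-sol}. The paper compresses this into one sentence, but the explicit case split on $t$ and the equation-by-equation application of Lemma \ref{le:check-sol} are exactly the bookkeeping the paper intends.
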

Finally, from Corollary
\ref{co:semenov-NP-hard} we obtain
\begin{corollary}\label{co:Semenov-NP-complete2}
The Diophantine problem for systems \eqref{eq:semenov8}
is $\NP$-complete.
\end{corollary}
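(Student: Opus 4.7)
The plan is entirely routine: a proof by concatenation of results already established in the paper. For $\NP$-membership, Corollary \ref{co:Semenov-NP3} directly gives that the Diophantine problem for systems of type \eqref{eq:semenov8} lies in $\NP$, so nothing further needs to be done on that side. For $\NP$-hardness, I would observe that a single equation \eqref{eq:semenov} is the special case of \eqref{eq:semenov8} obtained by taking $s=1$, and that under our encoding conventions this inclusion is size-preserving (so trivially a polynomial-time Karp reduction). Hence the hardness established in Corollary \ref{co:semenov-NP-hard} transfers verbatim to the more general class.

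To write the argument cleanly, I would (i) cite Corollary \ref{co:Semenov-NP3} for membership in $\NP$; (ii) note that the identity map sending a single equation to the one-equation system is a polynomial-time (indeed, size-preserving) reduction from the single-equation problem to the system problem; (iii) invoke Corollary \ref{co:semenov-NP-hard} to conclude $\NP$-hardness, which combined with (i) yields $\NP$-completeness. The only small point worth spelling out is that Corollary \ref{co:semenov-NP-hard} is stated for an arbitrary fixed $\alpha \in \CQ\setminus\{0,1\}$, so in particular fixing any such $\alpha$ already forces hardness of the more general (uniform or fixed-base) system problem.

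There is no genuine obstacle here; this is a bookkeeping corollary whose purpose is to package Corollaries \ref{co:semenov-NP-hard} and \ref{co:Semenov-NP3} into a single headline statement. The only thing to be careful about is matching the conventions (homogeneous vs.\ non-homogeneous, presence of roots of unity among the bases) so that the cited results apply verbatim to instances of \eqref{eq:semenov8}, and this is immediate from the discussions at the beginning of Section \ref{se:semenov-np-non-homo-system} and Section \ref{se:gen-sys}.
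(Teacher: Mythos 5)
Your proposal is correct and matches the paper's own argument, which likewise obtains the corollary by combining the $\NP$-membership of Corollary \ref{co:Semenov-NP3} with the hardness of Corollary \ref{co:semenov-NP-hard} via the observation that a single equation is a special case of a system. The paper states this in one line; your additional care about the homogeneous/non-homogeneous and root-of-unity conventions is harmless and already handled by the reductions at the start of Section \ref{se:semenov-np-non-homo-system}.
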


\section{Structure of the solution set}
\label{se:semenov-solution-set}

We say that a set $S\subseteq \MZ^k$ is \emph{semilinear} if
$S$ is a finite union of cosets, i.e.,
$$
S=\bigcup_{i=1}^n (\ovdelta_i+A_i),\ \
\mbox{ for some $\ovdelta_1,\ldots,\ovdelta_n \in\MZ^k$ and
$A_1,\ldots,A_n \le \MZ^k$.}
$$
Let $\ovx=(x_1,\ldots,x_k)$ be a solution for a system $E$ of type \eqref{eq:semenov8}. Following Section \ref{se:gen-sys}, let $\CJ=\{J_1,\ldots,J_m\}$ be a cluster structure corresponding to $\ovx$ and $E_{\leq t}$, and let us define
$$N_E = \begin{cases}
1 &\mbox{ if }t = s \\
\lcm(n_{t+1},\ldots,n_s) &\mbox{ otherwise}.
\end{cases}$$
The set $\CJ$ defines the tuples $\Delta_{J_1},\ldots,\Delta_{J_m}\in\{0,1\}^k$, so by Lemma \ref{le:shift-blocks3} the pair $(\ovx,\CJ)$ defines the following set of solutions for $E$
$$
S_{\ovx,\CJ}=
\Set{\ovx+N_E\beta_1\Delta_{J_1}+\dots+N_E\beta_m\Delta_{J_m}}{\beta_1,\ldots,\beta_m\in\MZ}
\ \subseteq\ \MZ^k
$$
(note that $S_{\ovx,\CJ}$ is a coset of $\MZ^k$). Define $\CT(E)$ to be the set of all pairs $(\ovx,\CJ)$ such that
\begin{itemize}
\item $\ovx$ is a solution for $E$;
\item $\CJ$ is a cluster structure for $\ovx$ and $E_{\leq t}$;
\item (boundedness)
$\ovx=(x_1,\ldots,x_k)$ satisfies \eqref{eq:Semenov-bound-general3}.
%\item (disjointness) $\Delta_1+\dots+\Delta_{m(\CJ)}\in\{0,1\}^k$ for every $\CJ$.
\end{itemize}
By construction, $\CT(E)$ is finite.

\begin{proposition}[Completeness]\label{pr:completeness}
The set of all solutions of a system $E$ of type \eqref{eq:semenov8} is equal to
\begin{equation*}\label{eq:sol-set}
S(E) = \bigcup_{(\ovx,\CJ)\in\CT(E)} S_{\ovx,\CJ}.
\end{equation*}
\end{proposition}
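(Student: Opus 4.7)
The plan is to prove the set equality by double inclusion. The direction $S(E) \supseteq \bigcup_{(\ovx,\CJ)\in\CT(E)} S_{\ovx,\CJ}$ is essentially Lemma \ref{le:shift-blocks3}: for each $(\ovx,\CJ) \in \CT(E)$ and arbitrary $\beta_1,\ldots,\beta_m \in \MZ$, the tuple $\ovx + N_E\beta_1\Delta_{J_1}+\cdots+N_E\beta_m\Delta_{J_m}$ is a solution of $E$. For $1 \leq t < s$ this is exactly the content of Lemma \ref{le:shift-blocks3}. The case $t = s$ uses Lemma \ref{le:shift-blocks2} with $N_E = 1$. When $t = 0$, the cluster structure $\CJ$ consists of singletons (condition (C1) is vacuous for the empty system, so only singletons satisfy (C2)), and the claim reduces to the observation that every equation of $E$ is invariant under adding any multiple of $N_E = \lcm(n_1,\ldots,n_s)$ to a single coordinate.

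For the reverse inclusion, I would start with an arbitrary solution $\ovy$ of $E$. Since $\ovy$ is in particular a solution of $E_{\leq t}$, I may pick a cluster structure $\CJ = \{J_1,\ldots,J_m\}$ for $\ovy$ relative to $E_{\leq t}$. Following the proof of Theorem \ref{th:non-homogeneous-system-with-1}, choose integers $\beta_i$ so that $0 \leq \min(\ovy_{J_i}) + N_E \beta_i < N_E$, and set
\[
\ovx \;=\; \ovy + N_E\beta_1 \Delta_{J_1} + \cdots + N_E \beta_m \Delta_{J_m}.
\]
By Lemma \ref{le:shift-blocks3}, $\ovx$ is again a solution of $E$, and the definition of $\mspn(E_{\leq t})$ combined with the choice of the $\beta_i$ gives the bound \eqref{eq:Semenov-bound-general3}. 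The pair $(\ovx,\CJ)$ will then belong to $\CT(E)$, provided $\CJ$ is still a cluster structure for $\ovx$ relative to $E_{\leq t}$. This holds because shifting the coordinates of a single cluster $J_i$ by the common value $N_E \beta_i$ multiplies each subsum $\sum_{j \in J_i} q_{\ell j}\alpha_\ell^{x_j}$ (for $\ell \leq t$) by the nonzero scalar $\alpha_\ell^{N_E \beta_i}$, which preserves both the vanishing condition (C1) and the minimality condition (C2). Finally, $\ovy = \ovx + N_E(-\beta_1)\Delta_{J_1} + \cdots + N_E(-\beta_m)\Delta_{J_m} \in S_{\ovx,\CJ}$, placing $\ovy$ in the claimed union.

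I do not anticipate a substantive technical obstacle: the proposition is essentially the packaging of Theorem \ref{th:non-homogeneous-system-with-1} (which guarantees a bounded representative for each cluster structure) together with the invariance Lemma \ref{le:shift-blocks3}. The only points requiring mild care are verifying that the cluster structure is preserved under the reduction step (as above) and checking that the boundary cases $t = 0$ and $t = s$ are handled correctly in both directions of the inclusion.
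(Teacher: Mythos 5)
Your proposal is correct and follows essentially the same route as the paper: Lemma \ref{le:shift-blocks3} gives the inclusion $\supseteq$, and the reduction argument from the proof of Theorem \ref{th:non-homogeneous-system-with-1} produces, for any solution $\ovy$, a bounded representative $\ovx$ with the same cluster structure so that $\ovy\in S_{\ovx,\CJ}$. The only difference is that you spell out two points the paper leaves implicit --- that reducing each cluster modulo $N_E$ preserves the cluster structure (since each subsum is just multiplied by $\alpha_\ell^{N_E\beta_i}\neq 0$), and the boundary cases $t=0$ and $t=s$ --- which is a welcome but not substantively different elaboration.
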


\begin{proof}
By Lemma \ref{le:shift-blocks3}, $\bigcup_{(\ovx,\CJ)\in\CT} S_{\ovx,\CJ}$
is a set of solutions for $E$.
Conversely, if $\ovy$ is a solution for $E$ and $\CJ$ is a cluster structure for $\ovx$ and $E_{\leq t}$, then (as in the proof of Theorem \ref{th:non-homogeneous-system-with-1}) the set $S_{\ovy,\CJ}$ contains a solution $\ovx$ satisfying
\eqref{eq:Semenov-bound-general3} with the same cluster structure $\CJ$.
Hence, $(\ovx,\CJ)\in\CT(E)$ and $\ovy\in S_{\ovy,\CJ}=S_{\ovx,\CJ}$, proving that $\ovy\in S(E)$.
\end{proof}

\begin{corollary}
The set of all solutions of \eqref{eq:semenov8} is semilinear.
\end{corollary}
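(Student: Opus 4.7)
The plan is direct: apply Proposition \ref{pr:completeness} and then just unpack the definition of semilinearity given at the start of Section \ref{se:semenov-solution-set}. That proposition already identifies the full solution set with the finite union
\[
S(E) \;=\; \bigcup_{(\ovx,\CJ) \in \CT(E)} S_{\ovx,\CJ},
\]
so the only things left to verify are (i) that this union is finite and (ii) that each piece $S_{\ovx,\CJ}$ is a coset of a subgroup of $\MZ^k$.

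For (i), I would just cite the remark made immediately before Proposition \ref{pr:completeness}: $\CT(E)$ is finite because the boundedness condition \eqref{eq:Semenov-bound-general3} confines $\ovx$ to a finite box in $\MZ^k$, and for each such $\ovx$ there are only finitely many candidate cluster structures (each being a partition of $\{1,\ldots,k\}$). For (ii), by construction $S_{\ovx,\CJ}$ is the translate $\ovx + A_{\CJ}$, where
\[
A_{\CJ} \;=\; N_E\MZ\,\Delta_{J_1} + \cdots + N_E\MZ\,\Delta_{J_m} \;\le\; \MZ^k,
\]
which is manifestly a subgroup of $\MZ^k$; hence $S_{\ovx,\CJ}$ is a coset in the sense of the definition at the top of the section.

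Combining (i) and (ii), $S(E)$ is a finite union of cosets of subgroups of $\MZ^k$, which is exactly the definition of semilinear adopted in this paper. There is no real obstacle here: all the hard work has already been invested in proving Proposition \ref{pr:completeness}, itself built on the boundedness statement Theorem \ref{th:non-homogeneous-system-with-1} and the shift lemma (Lemma \ref{le:shift-blocks3}). The corollary is therefore an essentially immediate rewriting in terms of the definition.
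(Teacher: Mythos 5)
Your proposal is correct and matches the paper's own (essentially immediate) justification: Proposition \ref{pr:completeness} exhibits $S(E)$ as a finite union of the sets $S_{\ovx,\CJ}$, each of which is by construction a coset of the subgroup of $\MZ^k$ generated by $N_E\Delta_{J_1},\ldots,N_E\Delta_{J_m}$, and $\CT(E)$ is finite by the boundedness condition. Nothing further is needed.
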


%\subsection{Solution set of a non-homogeneous system}

With minor adjustments, the foregoing arguments apply to non-homogeneous systems as well. Specifically, if $E$ is a non-homogeneous system, then we form the associated homogeneous system $E'$ with auxiliary variable $x_0$, as in \eqref{eq:semenov6}. We now consider the set of $\ovx$ which are solutions to both $E$ and $E'$, and cluster structures $\CJ$ asscociated to $\ovx$ and $E_{\leq t}'$, where each $\CJ$ is of the form $\{J_1,\ldots,J_k,J_0\}$ and $0\in J_0$. We further stipulate that only the clusters $J_1,\ldots,J_k$ can be shifted (but not $J_0$). Hence, the definition of $S_{\ovx,\CJ}$ remains the same, and the proof of Proposition \ref{pr:completeness} goes through unchanged.

\section{Parameterized complexity of the Diophantine problem}
\label{se:semenov-parametrized-complexity}

In this section, we show that the Diophantine problem for
systems \eqref{eq:semenov8} can be solved in polynomial time if the number
of variables $k$ is bounded by a fixed constant. In other words, the Diophantine problem for \eqref{eq:semenov8} is \emph{fixed-parameter tractable}.

Let $q$ be a polynomial such that the time complexity for validating
an $\NP$-certificate $\ovx = (x_1,\ldots,x_k)$ for a system $E$
of type \eqref{eq:semenov8} (i.e. for checking that $\ovx$ is a solution) is $O(q(\size(E)))$, and let $p$ be a polynomial such that
$$
\mspn(E) < p(\size(E)).
$$
By Corollaries \ref{co:Semenov-NP3} and \ref{co:mspan-poly},
such polynomials exist.

\begin{proposition}\label{pr:complexity}
There exists an algorithm that
decides if a given system $E$ of type \eqref{eq:semenov8}
has a solution in time
\begin{equation}\label{eq:k-poly-bound}
O\rb{2^{k^2}\!\size^{2k^2}(E)\cdot p(\size(E))^k\cdot q(\size(E))}.
\end{equation}
\end{proposition}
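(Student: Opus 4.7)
The plan is to brute-force enumerate candidate solutions within the range guaranteed by Theorem~\ref{th:non-homogeneous-system-with-1}, verifying each with the polynomial-time procedure of Lemma~\ref{le:check-sol}. By Theorem~\ref{th:non-homogeneous-system-with-1}, $E$ has a solution if and only if it has one $\ovx=(x_1,\ldots,x_k)$ satisfying $0\le x_i < B$, where
\begin{equation*}
B \;=\; N + \mspn(E_{\leq t}), \qquad N = \begin{cases} 1 & \text{if } t=s,\\ \lcm(n_{t+1},\ldots,n_s) & \text{otherwise.}\end{cases}
\end{equation*}
The algorithm therefore ranges over all $k$-tuples in $\{0,1,\ldots,\lceil B\rceil-1\}^k$ and calls the checker from Lemma~\ref{le:check-sol} on each candidate, accepting as soon as a solution is found and rejecting otherwise. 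Correctness is immediate from Theorem~\ref{th:non-homogeneous-system-with-1}.

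The next step is the running-time analysis. The number of candidates is $B^k$. Using the bound $N < 2^k\size^{2k}(E)$ from \eqref{eq:N-exp-bound} together with $\mspn(E_{\leq t}) \le p(\size(E))$ from the definition of $p$, and the elementary inequality $a+b \le 2ab$ valid whenever $a,b\ge 1$, I can estimate
\begin{equation*}
B^k \;\le\; \bigl(2N\cdot\mspn(E_{\leq t})\bigr)^k \;\le\; 2^{k}\cdot 2^{k^2}\size^{2k^2}(E)\cdot p(\size(E))^k \;=\; O\!\left(2^{k^2}\size^{2k^2}(E)\cdot p(\size(E))^k\right).
\end{equation*}

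Each call to the checker runs in $O(q(\size(E)))$ time: by Theorem~\ref{th:non-homogeneous-system-with-1} every enumerated tuple is a valid $\NP$-certificate in the sense used to define $q$ (its components having magnitude at most $B$, so $\log M = O(\size^2(E))$, which is already absorbed into the polynomial $q$). Multiplying the number of candidates by the per-check cost yields the stated bound \eqref{eq:k-poly-bound}.

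The only delicate points are bookkeeping ones: I must take $N=1$ in the edge case $t=s$ (so that $B$ is still positive) and interpret $\mspn(E_{\leq t})=0$ (which occurs, e.g., when $t=0$ or when $E_{\leq t}$ has no solutions) without breaking the elementary inequality used above; replacing $B$ by $B+1$ in the enumeration if necessary costs only a constant factor and does not affect the final polynomial bound. No part of the argument requires a substantive new idea beyond Theorem~\ref{th:non-homogeneous-system-with-1}, Corollary~\ref{co:mspan-poly}, and the existence of the polynomial $q$ from Corollary~\ref{co:Semenov-NP3}.
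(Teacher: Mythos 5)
Your proposal is correct and follows essentially the same route as the paper: enumerate all candidate tuples satisfying the bound \eqref{eq:Semenov-bound-general3} of Theorem~\ref{th:non-homogeneous-system-with-1}, count them as $(N+\mspn(E_{\leq t}))^k < (2^k\size^{2k}(E)+p(\size(E)))^k = O\rb{2^{k^2}\size^{2k^2}(E)\cdot p(\size(E))^k}$, and multiply by the $O(q(\size(E)))$ cost of checking each one via Lemma~\ref{le:check-sol}. Your extra bookkeeping (the $a+b\le 2ab$ trick and the $t=s$ and $\mspn=0$ edge cases) is harmless and does not change the substance of the argument.
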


\begin{proof}
Using \eqref{eq:N-exp-bound}, we see that the total number of $\NP$-certificates
$\ovx = (x_1,\ldots,x_k)$ satisfying \eqref{eq:Semenov-bound-general3} is
$$(N_E+\mspn(E_{\leq t}))^k < (2^k\size^{2k}(E) + p(\size(E)))^k,$$
which is $O\rb{2^{k^2}\!\size^{2k^2}(E)\cdot p(\size(E))^k}$.
It is straightforward to enumerate all such certificates, and the time required to check each one is $O(q(\size(E)))$, so the result follows.
\end{proof}

\begin{corollary}\label{co:bounded-Semenov-polynomial-time}
Fix $K\in\MN$.
The Diophantine problem
for systems \eqref{eq:semenov8}
with the number of variables bounded by $K$
can be solved in polynomial time.
Hence, the Diophantine problem
for systems \eqref{eq:semenov8}
is a fixed-parameter tractable problem.
\end{corollary}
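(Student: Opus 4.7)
The plan is to apply Proposition \ref{pr:complexity} directly. The running time bound established there is
$$O\bigl(2^{k^2}\size^{2k^2}(E)\cdot p(\size(E))^k\cdot q(\size(E))\bigr),$$
and each of the four factors is polynomial in $\size(E)$ once $k$ is replaced by the fixed constant $K$. Explicitly, for any instance $E$ with at most $K$ variables, we have $2^{k^2}\le 2^{K^2}$, which is a constant depending only on the parameter; $\size^{2k^2}(E)\le \size^{2K^2}(E)$ is polynomial of degree $2K^2$ in $\size(E)$; $p(\size(E))^k\le p(\size(E))^K$ is polynomial, being the $K$-th power of a polynomial; and $q(\size(E))$ is polynomial by definition. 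Their product is therefore polynomial in $\size(E)$, which proves the first assertion.

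For the second assertion, the bound in Proposition \ref{pr:complexity} has the form $f(k)\cdot (\size(E))^{g(k)}$, where $f$ and $g$ depend only on the parameter $k$ (namely, on the number of variables). This is the standard schema for membership in the class of fixed-parameter tractable problems, since the exponent of the instance size and the multiplicative constant are bounded by functions of the parameter alone. Thus the family of Diophantine problems \eqref{eq:semenov8} parameterized by $k$ is fixed-parameter tractable.

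No substantial obstacle arises here, as the corollary is a direct specialization of Proposition \ref{pr:complexity}. The only subtlety worth flagging is to be explicit about which quantities are constants (once $K$ is fixed) and which are genuine polynomials in $\size(E)$, so that the reader sees that the running time degenerates to a polynomial of degree $O(K^2)$ in the instance size.
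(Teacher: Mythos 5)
Your proposal is correct and follows exactly the paper's own (one-line) argument: specialize the running-time bound of Proposition \ref{pr:complexity} to $k\le K$ and observe that every factor becomes polynomial in $\size(E)$. The extra explicitness about which factors are constants and which are polynomials of degree $O(K^2)$ is harmless elaboration of the same route.
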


\begin{proof}
The complexity bound \eqref{eq:k-poly-bound} of Proposition \ref{pr:complexity} is polynomial if $k$ is bounded by
the given constant $K$.
\end{proof}

%\bibliography{main_bibliography}

\end{document}